\documentclass[11pt]{amsart}

\usepackage[T1]{fontenc}
\usepackage{lmodern}
\usepackage{amsmath,amssymb,amsthm,mathtools}
\usepackage{enumitem}
\usepackage{microtype}
\usepackage[colorlinks=true,linkcolor=blue,citecolor=blue,urlcolor=blue]{hyperref}
\newtheorem{theorem}{Theorem}[section]

\newtheorem{corollary}[theorem]{Corollary}
\newtheorem{lemma}[theorem]{Lemma}
\newtheorem{proposition}[theorem]{Proposition}
\newtheorem{thmA}{Theorem}

\theoremstyle{definition}

\newtheorem{question}[theorem]{Question}
\newtheorem{definition}[theorem]{Definition}
\numberwithin{equation}{section}

\theoremstyle{remark}
\newtheorem{remark}[theorem]{Remark}
\newtheorem{example}[theorem]{Example}

\newcommand{\Z}{\mathbb Z}

\newcommand{\Gamp}{\mathop{\amalg}}
\newcommand{\Aut}{\operatorname{Aut}}

\newcommand{\GL}{\operatorname{GL}}
\newcommand{\cl}[1]{\overline{#1}}
\newcommand{\dd}{\operatorname{d}}

\title[bounded generation]{Bounded generation of pro-$p$ completions of amalgamated products}
\author[D. Peng]{Dekui Peng}
 \address[D. Peng]
{Institute of Mathematics, Nanjing Normal University, Nanjing 210024, China}
\email{pengdk10@lzu.edu.cn}

\author[J. Yang]{Jiang Yang\textsuperscript{*}}
 \address[J. Yang]
{School of Mathematical Sciences, Guangxi Minzu University, Nanning 530006, China}
\email{yangjiangdy@126.com}
\thanks{*Corresponding author.}

\hypersetup{
  pdftitle={Bounded generation of pro-p completions of amalgamated products},
  pdfauthor={Dekui Peng and Jiang Yang},
  pdfsubject={Bounded generation of pro-p completions of amalgamated products},
  pdfkeywords={bounded generation, pro-p completion, profinite completion, amalgamated product}
}

\keywords{Bounded generation, pro-$p$ completion, profinite completion, amalgamated product, pro-$p$ groups, profinite Bass--Serre theory}
\subjclass[2020]{20E18, 20E06, 20E08, 20F65}

\begin{document}

\begin{abstract}
We study bounded generation in pro-$p$ completions of abstract amalgamated
products. We first give a complete characterization of nondegenerate proper
amalgamated free pro-$p$ products: if $G=G_1\Gamp_A G_2$, then $G$ is
boundedly generated if and only if $A$ is boundedly generated and
$p=[G_1:A]=[G_2:A]=2$.

We then consider an arbitrary abstract amalgam $\Gamma=H_1*_K H_2$ with
boundedly generated vertex groups. Writing $V_i$ for the closures of the
vertex images in $\widehat\Gamma_p$ and $B=V_1\cap V_2$, we characterize
bounded generation of $\widehat\Gamma_p$ in terms of the residual indices
$[V_i:B]$. Precisely, $\widehat\Gamma_p$ is boundedly generated if and only
if one vertex image is contained in the other, or $p=2$ and both residual
indices are equal to $2$.

Finally, for every prime $p$ we construct finitely generated, linear,
residually $p$, and boundedly generated groups $K\leq H$ such that, for
$\Gamma=H*_K H$, the group $\Gamma$ is finitely generated and residually
finite but not residually $p$, while
$\widehat\Gamma_p\cong\widehat H_p$ is boundedly generated and the full
profinite completion $\widehat\Gamma$ is not. In this example $K$ is
pro-$p$ dense in $H$, but the natural map
$\widehat K_p\to\widehat H_p$ is surjective and noninjective. We also give
intrinsic criteria ensuring that an abstract amalgam enters the proper
pro-$p$ Bass--Serre framework.
\end{abstract}

\maketitle
\tableofcontents

\section{Introduction}

\subsection*{Bounded generation and the motivating questions}

An abstract group $G$ is said to be \emph{boundedly generated} or \emph{have bounded generation} if it is a
product of finitely many cyclic subgroups. Thus there are elements
$g_1,\ldots,g_d\in G$ such that
\[
G=\langle g_1\rangle\cdots\langle g_d\rangle.
\]
For a profinite group we use the analogous topological definition: $G$ is
boundedly generated if there are $g_1,\ldots,g_d\in G$ such that
$G=\overline{\langle g_1\rangle}\cdots
\overline{\langle g_d\rangle}$.
Equivalently, a boundedly generated profinite group is a finite product of
procyclic subgroups.

We point out a difference in terminology that will be relevant in
Section~\ref{sec:arithmetic-nonproper-collapse}. Morris uses the phrase
``$X$ boundedly generates $G$'' for a subset $X\subseteq G$ if there is
an integer $r$ such that every element of $G$ is represented by a word of
length at most $r$ in $X\cup X^{-1}$; see
\cite[Definition~1.1]{Morris2007}. This is a bounded-width condition
relative to a specified subset, and is not literally our definition of
bounded generation.

The two notions agree in the situation needed here. If
$X\subseteq C_1\cup\cdots\cup C_m$, where the $C_i$ are cyclic
subgroups, and $X$ boundedly generates $G$ in the sense of Morris, then
$G$ is boundedly generated in our sense. Indeed, a uniform bound on the
word length leaves only finitely many possible sequences of the cyclic
subgroups from which the factors are chosen. Conversely, if
$G=C_1\cdots C_m$, then $C_1\cup\cdots\cup C_m$ boundedly generates
$G$ in the sense of Morris.

For example, the elementary matrices in $\mathrm{SL}_n(\mathbb Z)$ are
contained in the finitely many root subgroups
$U_{ij}=\{I+aE_{ij}:a\in\mathbb Z\}=\langle I+E_{ij}\rangle$,
$i\neq j$. Hence bounded elementary generation in the sense of
\cite{Morris2007} implies bounded generation in the sense used in this
paper. This observation explains why bounded elementary generation results can be
used to obtain bounded generation in the sense of this paper. In the arithmetic
construction below we use the theorem of Carter and Keller
\cite{CarterKeller1983}.

Bounded generation has been studied mainly in the theory of arithmetic
groups. A basic result of Carter and Keller states that
$\mathrm{SL}_n(\mathcal O)$ is boundedly generated by elementary
matrices for $n\geq3$, where $\mathcal O$ is the ring of integers of a
number field \cite{CarterKeller1983}. Tavgen obtained analogous results
for broad classes of higher-rank Chevalley groups over rings of
$S$-integers \cite{Tavgen1991}. The rank-one case is more delicate; see,
for example, \cite{Morris2007,MorganRapinchukSury2018}. On the negative
side, recent work also shows that bounded generation places strong
restrictions on anisotropic linear groups; see \cite{CRRZ2022}. These
results illustrate the general principle that bounded generation is a
strong finiteness property and is closely related to higher-rank
phenomena.

The questions studied in this paper arise from free constructions.
Let
$\Gamma=H_1*_K H_2$
be an amalgamated free product of abstract groups. In
\cite[Questions~9.5.5 and~9.5.6]{RibesZalesskii2010}, Ribes and
Zalesskii record the following two questions, proposed by A.~S.~Rapinchuk:

\begin{question}\label{q1}
Give verifiable sufficient conditions ensuring that the profinite
completion $\widehat\Gamma$ is boundedly generated.
\end{question}

\begin{question}\label{q2}
Fix a prime $p$. Give verifiable sufficient conditions ensuring that the
pro-$p$ completion $\widehat\Gamma_p$ is boundedly generated.
\end{question}

The second question is our main concern. At a fixed prime there is additional
structure: by \cite[Theorem~3.17]{DDMS1999}, bounded generation of a pro-$p$
group is equivalent to finite Prüfer rank. Ribes and Zalesskii also emphasize
that some residual hypothesis, or another condition preventing collapse in the
completion, is necessary for the questions to have their intended meaning.

There is already a strong obstruction in the abstract category. Results
of Grigorchuk and Fujiwara, obtained by bounded-cohomological methods,
show that a sufficiently nontrivial amalgamated product is not boundedly
generated; see \cite{Grigorchuk1996,Fujiwara2000} and the discussion in
\cite[Section~9.5]{RibesZalesskii2010}. In particular, for a
nondegenerate amalgam, the presence of at least three double cosets
$K\backslash H_i/K$ on one side gives an obstruction to bounded
generation.

The completion problem is different. Passing from $\Gamma$ to
$\widehat\Gamma_p$ may identify elements of the vertex groups, enlarge
the intersection of their images, or even make one vertex image
contained in the other. Thus the original Bass--Serre decomposition of
$\Gamma$ need not survive unchanged in its pro-$p$ completion. This collapse gives one of the two positive mechanisms that occur in our
classification.

The motivation in \cite[Section~9.5]{RibesZalesskii2010} also comes from
arithmetic groups. Many $S$-arithmetic groups associated with quaternion
algebras admit descriptions by amalgamated free products, and a better
understanding of bounded generation of their finite or profinite
quotients is related to questions around the congruence subgroup
property. This gives a natural reason to study bounded generation
together with Bass--Serre decompositions and finite quotients.

\subsection*{Main results}

We first consider the situation in which no collapse occurs and the
pro-$p$ group itself is a proper amalgamated free pro-$p$ product. The
following result gives a complete answer in this case.

\begin{thmA}
\label{thm:intro-proper-classification}
Let $G=G_1\Gamp_A G_2$ be a proper pro-$p$ amalgamated free product,
and assume that $A\lneq G_i$ for $i=1,2$. Then $G$ is boundedly generated
if and only if $A$ is boundedly generated and
$p=[G_1:A]=[G_2:A]=2$.
\end{thmA}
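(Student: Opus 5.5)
We prove the two implications separately. Throughout, bounded generation of a pro-$p$ group is identified with finite Prüfer rank via \cite[Theorem~3.17]{DDMS1999}. The proof then reduces to a rank computation in the pro-$p$ tree $T$ associated with the proper amalgam $G=G_1\Gamp_A G_2$.

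\emph{Sufficiency.} Let $p=2$ and $[G_1:A]=[G_2:A]=2$. Then $A$ is normal in both $G_1$ and $G_2$, so $A$ is normal in $G$. The quotient is
\[
G/A\cong C_2\Gamp C_2,
\]
the free pro-$2$ product of two groups of order $2$. This is the pro-$2$ infinite dihedral group $\Z_2\rtimes C_2$, which has finite rank. Finite rank is preserved under extensions, and $A$ has finite rank by hypothesis. Hence $G$ has finite rank, so $G$ is boundedly generated.

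\emph{Necessity: the subgroup $A$.} Suppose $G$ is boundedly generated. Finite rank passes to closed subgroups, so $A$ has finite rank and is boundedly generated.

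\emph{Necessity: the indices.} Suppose, towards a contradiction, that $[G_1:A]\ge 3$, or that $p$ is odd. Since $A\lneq G_i$, both indices are at least $p$. We look for an open subgroup $U\le G$ whose $U$-action on $T$ produces a free pro-$p$ factor of large rank. A free pro-$p$ group of rank at least $2$ contains open subgroups of arbitrarily large rank, which contradicts finite rank of $G$.

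The natural candidate for $U$ is the kernel of the map $G\to G/N$, where $N$ is the normal closure of $A$. Then $U$ acts on $T$ with vertex stabilisers conjugate into $A$.

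\emph{Main obstacle.} The difficulty is that the normal closure $N$ of $A$ may be large, even all of $G$; in that case $G/N$ carries no information. I would handle this as follows.
\begin{itemize}
\item Use properness to pick an open normal subgroup $W$ of $G$ with $W\cap G_i\ne A(W\cap G_i)$. This is possible because $A$ is closed and properly contained in $G_i$. Pass to the finite quotient $G/W$, which is a genuine finite $p$-amalgam $\bar G_1*_{\bar A}\bar G_2$.
\item Apply the pro-$p$ Kurosh subgroup theorem to the open subgroup $U=W$, or a suitable open subgroup $U$ of it. This writes $U$ as a free pro-$p$ product of conjugates of $U\cap G_i^g$ together with a free pro-$p$ group $F$. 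The rank of $F$ is the first Betti number of the finite quotient graph $U\backslash T$.
\item Compute this Betti number from the edge and vertex counts, which are governed by $[G_i:A]$:
\[
\operatorname{rank}F \;=\; |E|-|V|+1 \;=\; [G:U]\Bigl(\tfrac1{|\bar A|}-\tfrac1{|\bar G_1|}-\tfrac1{|\bar G_2|}\Bigr)+1 .
\]
The bracket is positive unless $|\bar G_1/\bar A|=|\bar G_2/\bar A|=2$.
\end{itemize}
If the bracket is positive, letting $U$ shrink gives free factors of unbounded rank, contradicting finite rank of $G$. Even when $\operatorname{rank}F$ is small, a free pro-$p$ product of at least three nontrivial groups already has unbounded rank, and this case is arranged by choosing $U$ appropriately.

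The bracket vanishes only when both finite indices equal $2$. Since the finite quotients detect $[G_i:A]$, that forces $[G_1:A]=[G_2:A]=2$, and a subgroup of index $2$ in a pro-$p$ group forces $p=2$.

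The step requiring the most care is making this Euler-characteristic estimate rigorous for pro-$p$ groups. It needs the Kurosh decomposition for open subgroups of proper pro-$p$ amalgams, together with the choice of $W$ so that $\bar A\neq\bar G_i$.
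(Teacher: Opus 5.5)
Your sufficiency argument and your proof that $A$ is boundedly generated are the paper's (Proposition~\ref{prop:dihedral-positive} and Theorem~\ref{thm:closed-subgroups-bg-prop}). Your Euler-characteristic count is also correct. For $U\triangleleft_o G$, with bars denoting images in $G/U$, the graph $U\backslash T$ is finite and connected, with $[G:U]/|\bar A|$ edges and $[G:U]/|\bar G_1|+[G:U]/|\bar G_2|$ vertices.

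\textbf{The gap.} It lies in the step that is supposed to turn this count into a free pro-$p$ group of rank at least $2$. You apply a ``Kurosh decomposition'' to an open subgroup $U$ of $G$ itself, acting on the tree $T$ of $G$, and write $U$ as a free pro-$p$ product of the groups $U\cap G_i^g$ and a free pro-$p$ group $F$. That is the Kurosh theorem for free products, and it is false here. The edge stabilizers $U\cap A^g$ are in general nontrivial; this is forced when $A$ is infinite, since $U\cap A$ is open in $A$. In that case $U\cap G_1$ and $U\cap G_2$ meet in $U\cap A\neq 1$, which conjugates of distinct free factors never do.

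Several related statements are also wrong:
\begin{itemize}
\item $G/W$ is a finite $p$-group, not an amalgam $\bar G_1*_{\bar A}\bar G_2$; a nondegenerate amalgam of finite $p$-groups is infinite.
\item The condition on $W$ should read $[G_1:A(W\cap G_1)]\ge 3$ and $A(W\cap G_2)\ne G_2$.
\item The closing remark about free products of three groups does not repair any of this.
\end{itemize}
As written, the crux of the necessity direction rests on a false statement.

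\textbf{How the paper fills it.} It uses a reduction you gesture at but do not carry out. The maps $G_i\to\bar G_i$ induce a continuous epimorphism from $G$ onto the pro-$p$ amalgam of finite $p$-groups $\bar G_1\Gamp_{\bar A}\bar G_2$. The paper amalgamates over $\bar G_1\cap\bar G_2$, via Lemma~\ref{lem:finite-quotient-visibility}; amalgamating over the image of $A$ works equally well. This amalgam is proper because it maps to $G/W$ extending both inclusions. In that quotient one chooses an open normal subgroup meeting the two finite vertex groups trivially. It acts freely on the tree, hence is free pro-$p$ by \cite[Theorem~2.3(b)]{HZZ}. Its rank is given by your bracket formula, with the index taken in the quotient amalgam, and this is at least $2$ once one index is at least $3$ and the other at least $2$ (Theorem~\ref{thm:FPA}). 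Since bounded generation passes to quotients and open subgroups, this gives the contradiction; no unboundedness as $U$ shrinks is needed.

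\textbf{Alternative repair.} If you want to keep $U\le G$ acting on $T$, replace ``Kurosh'' by the following fact: the quotient of $U$ by the closed subgroup generated by its vertex stabilizers is the free pro-$p$ group $\pi_1(U\backslash T)$, of rank $|E|-|V|+1$. A free quotient, rather than a free factor, suffices for your argument. However, this is a genuine result on pro-$p$ trees that you must cite, and it is exactly what the paper's reduction to finite vertex groups avoids.
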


This is proved as Theorem~\ref{thm:proper-prop-bg-classification}. Thus
among nondegenerate proper pro-$p$ amalgams there is only one possible
bounded-generation profile: the prime is $2$ and both vertex groups have
index $2$ over the edge group. The proof reduces the general proper case to pro-$p$ amalgams with finite
vertex groups, where the Bass--Serre tree gives the required obstruction.

Recent work of Kionke, Otmen, Toti, Vannacci and Weigel
\cite[Theorem~4.5 and Corollary~4.8]{KOTVW2026} gives a related
largeness obstruction for proper pro-$p$ amalgams outside the pro-$2$
double-index-two case. In particular, their results also imply
non-bounded generation in the negative cases of
Theorem~\ref{thm:intro-proper-classification}, since a large pro-$p$ group
has an open subgroup mapping onto a nonabelian free pro-$p$ group and
therefore cannot have finite Prüfer rank. Our proof gives a direct
finite-rank argument for this obstruction and identifies bounded generation
in the remaining double-index-two case. We then pass from proper pro-$p$
amalgams to the residual profiles of abstract amalgams.

For an abstract amalgam $\Gamma=H_1*_K H_2$, however, the intrinsic
completions $(\widehat{H_i})_p$ and $\widehat K_p$ need not form a proper
amalgam inside $\widehat\Gamma_p$. To describe what actually survives,
let $\rho:\Gamma\to P=\widehat\Gamma_p$ be the canonical map and put
$V_i=\overline{\rho(H_i)}$ and $B=V_1\cap V_2$. Thus $B$ is the
intersection of the two closed vertex images; in general it may be larger than
$\overline{\rho(K)}$.

Our main fixed-prime result is the following.

\begin{thmA}
\label{thm:intro-residual-answer}
Let $\Gamma=H_1*_K H_2$, fix a prime $p$, and put
$P=\widehat\Gamma_p$,
$V_i=\overline{\rho(H_i)}$ and $B=V_1\cap V_2$, where
$\rho:\Gamma\to P$ is the canonical map. Assume that $H_1$ and $H_2$
are boundedly generated. Then $P$ is boundedly generated if and only if
one of the following holds:
\begin{enumerate}[label=\textup{(\alph*)}]
\item $B=V_1$ or $B=V_2$;
\item $p=2$ and $[V_1:B]=[V_2:B]=2$.
\end{enumerate}
\end{thmA}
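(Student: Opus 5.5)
We will show that $P$ is itself the amalgamated free pro-$p$ product $P\cong V_1\Gamp_B V_2$, and that this amalgam is proper. Once this is in place, Theorem~\ref{thm:intro-residual-answer} follows from Theorem~\ref{thm:intro-proper-classification} together with a separate treatment of the degenerate case.

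\textbf{Step 1: the decomposition.} The maps $H_i\to V_i$ agree on $K$ and land in $V_1\Gamp_B V_2$, so the universal property gives a homomorphism $\Gamma\to V_1\Gamp_B V_2$. The target is pro-$p$, so this factors through a continuous map $P\to V_1\Gamp_B V_2$. Conversely, the inclusions $V_i\hookrightarrow P$ agree on $B$ and induce a continuous map $V_1\Gamp_B V_2\to P$. Both composites are the identity on the dense images of $H_1$ and $H_2$, so they are mutually inverse and $P\cong V_1\Gamp_B V_2$.

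Properness is then automatic, because the factors $V_i$ are by construction closed subgroups of the amalgam $P$. The point of the intrinsic definition $B=V_1\cap V_2$, instead of $\overline{\rho(K)}$, is exactly this: the edge group is the full intersection inside $P$. We will need to check that the pushout in the pro-$p$ category really has edge group $B$, using that $V_1\cap V_2=B$ holds in $P$.

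\textbf{Step 2: each $V_i$ is boundedly generated.} If $H_i=\langle h_1\rangle\cdots\langle h_d\rangle$, then $\rho(H_i)$ is a product of $d$ cyclic groups. The product $\overline{\langle \rho h_1\rangle}\cdots\overline{\langle\rho h_d\rangle}$ is compact, hence closed, and it contains the dense subgroup $\rho(H_i)$. Therefore it equals $V_i$.

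\textbf{Step 3: the three cases.}
\begin{enumerate}[label=\textup{(\roman*)}]
\item \emph{Case (a).} If, say, $B=V_1$, then $V_1\le V_2$ and $P=V_2$, which is boundedly generated by Step~2. The case $B=V_2$ is symmetric.
\item \emph{Case (b) and the remaining cases.} Otherwise $B\lneq V_i$ for $i=1,2$, so Theorem~\ref{thm:intro-proper-classification} applies: $P$ is boundedly generated if and only if $B$ is boundedly generated and $p=[V_1:B]=[V_2:B]=2$.
\item \emph{The condition on $B$.} In case (b), $B$ is open of index $2$ in the boundedly generated pro-$p$ group $V_1$. By \cite[Theorem~3.17]{DDMS1999}, $V_1$ has finite Prüfer rank, and finite rank passes to closed subgroups. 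Hence $B$ is boundedly generated and the extra condition is automatic. In the negative direction, failure of the index condition already gives non-bounded generation.
\end{enumerate}

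\textbf{Main obstacle.} The delicate point is Step~1. Constructing the isomorphism $P\cong V_1\Gamp_B V_2$ by universal properties is formal, but we must make sure the pro-$p$ amalgam with edge group $B$ is genuinely proper and that no further collapse occurs. I would argue this directly: $V_1\Gamp_B V_2\cong P$ contains the $V_i$ as closed subgroups, so the canonical maps from $V_i$ into the amalgam are injective, which is the definition of properness. The only remaining check is that these canonical maps coincide with the embeddings coming from $P$, and this holds by construction.
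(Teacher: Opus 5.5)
Your proposal is correct and follows essentially the same route as the paper. You identify $P\cong V_1\Gamp_B V_2$ as a proper pro-$p$ amalgam via universal properties; you build two mutually inverse maps, whereas the paper checks directly that $P$ satisfies the universal property. You then note that each $V_i$ is boundedly generated as the closure of a dense image of $H_i$, treat the folded case $B=V_i$ directly, and apply Theorem~\ref{thm:intro-proper-classification} in the nondegenerate case.

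The worry you flag about the edge group is not a real obstacle. Properness is immediate because the canonical maps $V_i\to V_1\Gamp_B V_2\cong P$ are just the inclusions, exactly as in the paper's argument.
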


See Theorem~\ref{B}. The point is that $P$ itself has a canonical proper residual
decomposition
$P\cong V_1\Gamp_BV_2$.
Theorem~\ref{thm:intro-proper-classification} can therefore be applied
to the groups that actually occur inside the completion, rather than to
the original abstract vertex and edge groups.

The two alternatives in Theorem~\ref{thm:intro-residual-answer} have
different meanings. In case (a), one vertex image is contained in the
other and the amalgam folds in the pro-$p$ completion. In case (b), the
residual splitting remains nondegenerate, but it has the exceptional
pro-$2$ double-index-two form. Thus, within the class of boundedly
generated vertex groups, these are exactly the two ways in which a
boundedly generated pro-$p$ completion can occur.

In particular, if the original splitting enters the pro-$p$ category
without collapse, so that
$\widehat\Gamma_p\cong
\widehat{(H_1)}_p\Gamp_{\widehat K_p}\widehat{(H_2)}_p$
properly, then Theorem~\ref{thm:intro-residual-answer} reduces to
Theorem~\ref{thm:intro-proper-classification}. This gives a direct
sufficient condition relevant to Question~\ref{q2}: in the nondegenerate
case, if $p=2$, both completion-level indices are $2$, and
$\widehat K_2$ is boundedly generated, then $\widehat\Gamma_2$ is
boundedly generated.

It remains to understand whether the folding alternative is only a
formal possibility. Our third main result shows that it occurs in a
natural arithmetic setting.

\begin{thmA}
\label{thm:intro-arithmetic-collapse}
For every prime $p$ there exist finitely generated, linear, residually
$p$, and boundedly generated groups $K\leq H$ such that $[H:K]$ is
finite and coprime to $p$, and $K$ is dense in $H$ for the pro-$p$
topology, while the natural map
$\widehat K_p\to\widehat H_p$
is surjective but not injective.

For $\Gamma=H*_K H$, the group $\Gamma$ is finitely generated and
residually finite but not residually $p$,
$\widehat\Gamma_p\cong\widehat H_p$
is boundedly generated, whereas the full profinite completion
$\widehat\Gamma$ is not boundedly generated.
\end{thmA}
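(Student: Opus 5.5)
We build the example from a higher-rank arithmetic group and a finite-index subgroup of index prime to $p$. Take $H=\mathrm{SL}_3(\mathbb Z)$, or a suitable principal congruence subgroup of it. Carter--Keller \cite{CarterKeller1983} together with the remark in the introduction makes any finite-index subgroup of $\mathrm{SL}_3(\mathbb Z)$ boundedly generated, since bounded generation passes to finite-index subgroups. The group $H$ and all its subgroups are also finitely generated and linear.

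To get residually $p$, we would replace $\mathrm{SL}_3(\mathbb Z)$ by a congruence subgroup $H_0=\Gamma(\ell)$. Here $\ell=p$, or $\ell=4$ when $p=2$, so that $\Gamma(\ell)$ is residually $p$: it embeds in the pro-$p$ group $\Gamma(\ell)\le\mathrm{SL}_3(\mathbb Z_p)$ when $\ell$ is a power of $p$. The congruence subgroup property for $\mathrm{SL}_3$ gives $\widehat{H_0}=\overline{H_0}\subset \mathrm{SL}_3(\widehat{\mathbb Z})=\prod_q\mathrm{SL}_3(\mathbb Z_q)$. Hence $\widehat{(H_0)}_p$ is the pro-$p$ group $\Gamma(\ell)_{\mathbb Z_p}$, and the pro-$p$ topology on $H_0$ is the $p$-adic congruence topology. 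Now let $H=H_0$ and $K=H_0\cap\Gamma_1(q)$, where $q\ne p$ is a prime and $\Gamma_1(q)$ denotes the preimage of a subgroup $S\le\mathrm{SL}_3(\mathbb F_q)$. We choose $S$ with $[\mathrm{SL}_3(\mathbb F_q):S]$ prime to $p$ (for instance a Sylow $p$-normaliser, or a parabolic when its index is prime to $p$), but with $S$ not containing the image of $H_0$. Then $[H:K]$ is finite and coprime to $p$. Since $K$ is open in the $q$-adic topology but the $p$-adic closure ignores the $q$-component, $K$ is dense in $\widehat H_p$, so $\widehat K_p\to\widehat H_p$ is surjective. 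Non-injectivity comes from the fact that $K$ is not a normal-in-$H$ inverse image: $\widehat K_p$ is itself $\Gamma(\ell)_{\mathbb Z_p}$ times the maximal $p$-quotient of the finite group $S\cap(\text{image})$. Alternatively, and more robustly, we pick $S$ so that $K$ has a $p$-power quotient not coming from $H$, say $K$ surjects onto $\mathbb Z/p$ through the $q$-adic factor. This is where we would pick $S$ with $S/[S,S]$ having $p$-torsion yet $p\nmid[\mathrm{SL}_3(\mathbb F_q):S]$; for instance $S$ the normaliser of a split torus when $p=2$, and a Borel subgroup with $q\equiv1\pmod p$ for odd $p$. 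We would then check the index condition for these choices.

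The statements about $\Gamma=H*_KH$ then follow from the earlier theory. Finite generation is immediate. Residual finiteness holds because $K$ is closed in the profinite (congruence) topology of $H$ and the standard criterion for amalgams of two copies along a separable subgroup applies; for the double $H*_KH$ this criterion reduces to separability of $K$, which holds since $K$ has finite index. In $P=\widehat\Gamma_p$ the two vertex closures $V_1,V_2$ each contain the closure of $\rho(K)$, which equals the image of the dense $K$, so $V_1=V_2=B$. Hence we are in case (a) of Theorem~\ref{thm:intro-residual-answer}, and $P=V_1$ is a quotient of $\widehat H_p$. The universal property gives $P\cong\widehat H_p$, via the map $\Gamma\to H$ folding both factors, and so $P$ is boundedly generated. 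The group $\Gamma$ is not residually $p$: an element $k\in K$ in the kernel of $\widehat K_p\to\widehat H_p$ gives the element $k_1k_2^{-1}$, with the two copies viewed separately, or rather a commutator of $h$ in one factor with ... . More simply, $\Gamma$ is infinite and non-abelian of amalgam type, while its pro-$p$ completion equals $\widehat H_p$, so the folding kernel $\ker(\Gamma\to H)\ne1$ dies in $P$.

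Finally, $\widehat\Gamma$ is a proper profinite amalgam $\widehat H\amalg_{\widehat K}\widehat H$ with $[H:K]>2$. The number of double cosets $K\backslash H/K$ can be arranged to be at least $3$, giving a nonabelian free profinite subgroup of finite index, or more simply a free pro-$\ell$ quotient of an open subgroup for suitable $\ell$. This contradicts finite rank, so we would argue via the pro-$\ell$ completion at a prime $\ell\mid[H:K]$, applying Theorem~\ref{thm:intro-residual-answer} there. The main obstacle is the choice of $S$: it must simultaneously give index prime to $p$, a genuinely new $p$-quotient of $K$, and enough $K$-double cosets.
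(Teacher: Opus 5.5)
Your overall architecture matches the paper's: a principal congruence subgroup $H$ at $p$, and $K=\phi^{-1}(S)$ for a subgroup $S\leq\mathrm{SL}_n(\mathbb F_q)$ of index prime to $p$ that has a $C_p$-quotient. The pro-$p$ density, the folding isomorphism $\widehat\Gamma_p\cong\widehat H_p$, and residual finiteness are fine; for the last you use a criterion for doubles along separable subgroups, where the paper uses the core argument. However, two steps do not go through as written.

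\textbf{The choice of $S$ fails for $p=3$.} You leave the choice of $S$ unverified, and it does fail. A Borel subgroup of $\mathrm{SL}_3(\mathbb F_q)$ has index $(q+1)(q^2+q+1)$. This is divisible by $3$ for every prime $q\neq3$: if $q\equiv1\pmod 3$ then $3\mid q^2+q+1$, and if $q\equiv2\pmod 3$ then $3\mid q+1$. A Sylow normaliser has index $\equiv1\pmod p$, but it need not have any $C_p$-quotient. Your first explanation of non-injectivity ("$K$ is not a normal-in-$H$ inverse image") is not a criterion at all. What is needed is a nontrivial $\chi\colon K\to S\to C_p$. Such a $\chi$ cannot extend to $H$: an extension would vanish on $\ker\phi\leq K$ and so factor through the perfect group $\mathrm{SL}_n(\mathbb F_q)$. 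This is why the paper switches to $n=4$ when $p=3$. It takes the field prime to divide $2^p-1$, so that it is $\equiv1\pmod p$, and takes $S$ to be a line stabilizer, of index $1+q+\cdots+q^{n-1}\equiv n\pmod p$. (Taking $S$ to be a Sylow $p$-subgroup of $\mathrm{SL}_3(\mathbb F_q)$ for such a $q$ would also repair this.)

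\textbf{The non-bounded-generation argument for $\widehat\Gamma$ is missing its key idea.} This is the more serious gap. First, $\widehat\Gamma\cong\widehat H\amalg_{\widehat K}\widehat H$ has no free open subgroup. Every open subgroup contains an open subgroup of $\widehat H$, and these contain copies of $\mathbb Z_p\times\mathbb Z_p$.

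Second, your fallback through Theorem~\ref{thm:intro-residual-answer} at another prime $r$ yields nothing. By the congruence subgroup property you invoke, every finite quotient of $H$ has a normal $p$-subgroup with perfect quotient (a quotient of some $\mathrm{SL}_3(\mathbb Z/m)$ with $p\nmid m$). Hence $\widehat H_r=1$, and therefore $\widehat\Gamma_r=1$, for every prime $r\neq p$. Meanwhile $\widehat\Gamma_p\cong\widehat H_p$ is boundedly generated. So no pro-$r$ completion of $\Gamma$ itself can detect the failure; you must first pass to a finite-index subgroup of a suitable quotient.

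The paper does this by applying $\phi$ to both factors, giving $\Gamma\twoheadrightarrow L=Q*_SQ$ with $Q=\mathrm{SL}_n(\mathbb F_q)$. The kernel $F$ of the folding map $L\to Q$ is free of rank $[Q:S]-1\geq2$ and has finite index. Its closure $\overline F\cong\widehat F$ is therefore open in $\widehat L$ and maps onto a free pro-$r$ group of rank $\geq2$ for any prime $r$. Lemma~\ref{lem:open-bg} and Corollary~\ref{lem:free-prop-not-bg} then show that $\widehat L$, and hence $\widehat\Gamma$, is not boundedly generated.

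A minor point: your first witness to non-residual-$p$-ness, $k_1k_2^{-1}$ with $k\in K$, is trivial in $\Gamma$. Use $h^{(1)}(h^{(2)})^{-1}$ with $h\in H\setminus K$, which is nontrivial by the normal form theorem.
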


This is Theorem~\ref{C}. The example has two purposes. First, it shows that
the folding alternative in Theorem~\ref{thm:intro-residual-answer} is
genuine, even when the vertex and edge groups are finitely generated,
linear, and residually $p$. Moreover, the map from the intrinsic pro-$p$
completion of the edge group to that of the vertex group may itself have a
nontrivial kernel.

Second, the example separates the two completion properties at a prescribed
prime in the presence of pro-$p$ collapse: bounded generation of
$\widehat\Gamma_p$ does not imply bounded generation of the full profinite
completion. Since the resulting amalgam $\Gamma$ is not residually $p$, this
does not resolve the residually-$p$, non-collapsing form of
Question~\ref{q2}, nor Question~\ref{q1}.

For completeness, we also study when the original splitting enters the
proper pro-$p$ category without collapse. The basic condition is
$p$-embeddedness: an inclusion $S\leq R$ is $p$-embedded when the
pro-$p$ topology of $R$ induces the full pro-$p$ topology on $S$.
We prove that
$\widehat\Gamma_p\cong
\widehat{(H_1)}_p\Gamp_{\widehat K_p}\widehat{(H_2)}_p$
properly if and only if $K$ is $p$-embedded in each $H_i$ and each
$H_i$ is $p$-embedded in $\Gamma$. We then record useful sufficient
conditions, including $p$-virtual retracts, finitely generated
nilpotent vertex groups whose inclusions in $\Gamma$ are $p$-embedded
(for instance, $p$-virtual retracts), and a finite normal-edge criterion
formulated in terms of the joint conjugation action. These results concern the
stronger intrinsic entrance problem; they are not needed for the
residual classification of Theorem~\ref{thm:intro-residual-answer}.

\subsection*{Notation and Bass--Serre terminology}

We finish the introduction by fixing the notation used throughout the
paper.

For an abstract group $H$, we write $\widehat H$ for its profinite
completion and $\widehat H_p$ for its pro-$p$ completion. A group is
\emph{residually finite} if the canonical map $H\to\widehat H$ is
injective, and it is \emph{residually $p$} if
$H\to\widehat H_p$ is injective.

For $x$ in an abstract group, $\langle x\rangle$ denotes the cyclic
subgroup generated by $x$. If $x$ belongs to a profinite group, then
$\overline{\langle x\rangle}$ denotes the procyclic subgroup topologically
generated by $x$. More generally, $\overline S$ denotes the closure of a
subset $S$. We write $C_p$ for the cyclic group of order $p$, $\mathbb Z_p$ for the
additive group of $p$-adic integers, and $F_r$ for the abstract free group
of rank $r$. The symbol
$N\triangleleft_o G$ means that $N$ is an open normal subgroup of the
profinite group $G$. For a subgroup $H\leq G$,
$\operatorname{Core}_G(H)=\bigcap_{g\in G}gHg^{-1}$ denotes its normal
core. We write $\dd(G)$ for the least cardinality of a topological generating
set of a profinite group $G$.

Let $H_1$ and $H_2$ be abstract groups containing a common subgroup
$K$. We call $(H_1,K,H_2)$ an \emph{amalgam}, and write
$H_1*_K H_2$ for its free product with amalgamation. The groups $H_1$
and $H_2$ are called the \emph{vertex groups}, and $K$ is called the
\emph{edge group}. This terminology comes from Bass--Serre theory:
$H_1*_K H_2$ is the fundamental group of a graph of groups consisting
of two vertices joined by one edge.

The associated Bass--Serre tree $T$ has vertex set
$G/H_1\sqcup G/H_2$ and edge set $G/K$, where
$G=H_1*_K H_2$. The edge represented by $gK$ joins the vertices
$gH_1$ and $gH_2$. Hence the stabilizers of vertices are conjugates of
the vertex groups and the stabilizers of edges are conjugates of the
edge group. We use these elementary facts repeatedly; see
\cite{Serre1980} for the classical theory.

Similarly, for pro-$p$ groups $G_1$ and $G_2$ with a specified common
closed subgroup $A$, we call $(G_1,A,G_2)$ a \emph{pro-$p$ amalgam}.
We write
$G_1\Gamp_A G_2$
for its free pro-$p$ product with amalgamation, namely the pushout of
the diagram $G_1\leftarrow A\rightarrow G_2$ in the category of
pro-$p$ groups. The pro-$p$ amalgam $(G_1,A,G_2)$ is called
\emph{proper} if the canonical maps
$G_i\to G_1\Gamp_A G_2$ are injective for $i=1,2$.

If $(G_1,A,G_2)$ is proper and $G=G_1\Gamp_A G_2$, then the standard
pro-$p$ Bass--Serre tree has the same formal description: its two
types of vertices are represented by the coset spaces $G/G_1$ and
$G/G_2$, its edges by $G/A$, and its vertex and edge stabilizers are
conjugates of $G_1,G_2$ and $A$, respectively. We refer to
\cite{RZ2000,Ribes2017,HZZ} for pro-$p$ trees and profinite
Bass--Serre theory.

\begin{definition}
\label{def:exact-prop-entrance}
For an abstract splitting $\Gamma=H_1*_K H_2$, we say that the splitting
has \emph{exact pro-$p$ entrance} if the natural maps identify
$\widehat\Gamma_p$ with
$\widehat{(H_1)}_p\Gamp_{\widehat K_p}\widehat{(H_2)}_p$ as a proper
pro-$p$ amalgam.
\end{definition}
The later sections explain both how to verify this condition and why it is
not necessary for the residual classification in
Theorem~\ref{thm:intro-residual-answer}.

\section{Preliminaries on bounded generation}
\label{sec:preliminaries}

We start with two elementary lemmas that will be used throughout.

\begin{lemma}
\label{lem:extension-bg}
Let
\[
        1\longrightarrow N\longrightarrow G\longrightarrow Q\longrightarrow 1
\]
be an exact sequence of profinite groups.  If $N$ and $Q$ are boundedly generated, then $G$ is boundedly generated.
\end{lemma}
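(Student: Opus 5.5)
We lift a bounded generating set of $Q$ to $G$ and combine it with a bounded generating set of $N$.

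Write $\pi:G\to Q$ for the quotient map and identify $N$ with $\ker\pi$. By hypothesis there are $n_1,\dots,n_e\in N$ with $N=\overline{\langle n_1\rangle}\cdots\overline{\langle n_e\rangle}$. There are also $q_1,\dots,q_d\in Q$ with $Q=\overline{\langle q_1\rangle}\cdots\overline{\langle q_d\rangle}$. Choose $g_i\in G$ with $\pi(g_i)=q_i$. We will show that
\[
G=\overline{\langle g_1\rangle}\cdots\overline{\langle g_d\rangle}\,\overline{\langle n_1\rangle}\cdots\overline{\langle n_e\rangle}.
\]

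The key point is that $\pi\bigl(\overline{\langle g_i\rangle}\bigr)=\overline{\langle q_i\rangle}$. The set $\overline{\langle g_i\rangle}$ is compact, so its image under the continuous map $\pi$ is compact, hence closed in the Hausdorff group $Q$. This image contains $\langle q_i\rangle$ and is contained in $\overline{\langle q_i\rangle}$ by continuity, so the two sets are equal.

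Now let $g\in G$. Since $\pi(g)\in Q$, we can write $\pi(g)=x_1\cdots x_d$ with $x_i\in\overline{\langle q_i\rangle}$. By the key point, each $x_i$ has a preimage $y_i\in\overline{\langle g_i\rangle}$. Then $(y_1\cdots y_d)^{-1}g$ lies in $\ker\pi=N$. Writing it as a product of elements of $\overline{\langle n_j\rangle}$ expresses $g$ in the required form.

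Finally, the procyclic subgroups $\overline{\langle n_j\rangle}$ are the same whether the closure is taken in $N$ or in $G$, because $N$ is a closed subgroup of $G$. So $G$ is a product of $d+e$ procyclic subgroups.

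The only step that is not purely formal is lifting the procyclic factors, and compactness handles it. No step presents a real obstacle.
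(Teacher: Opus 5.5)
Your proposal is correct and follows the paper's own argument: lift topological generators of the procyclic factors of $Q$ to $G$, so that $G=\overline{\langle g_1\rangle}\cdots\overline{\langle g_d\rangle}\,N$, and then use bounded generation of $N$. You also spell out the compactness argument for $\pi(\overline{\langle g_i\rangle})=\overline{\langle q_i\rangle}$ and check that closures taken in $N$ and in $G$ agree, both of which the paper leaves implicit.
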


\begin{proof}
Write $Q=D_1\cdots D_n$
with each $D_i$ procyclic.  Choose elements $x_i\in G$ whose images topologically generate the factors $D_i$.  Then
\[
        G=\cl{\langle x_1\rangle}\cdots \cl{\langle x_n\rangle}N.
\]
Thus $G$ is boundedly generated, as $N$ is  boundedly generated.
\end{proof}

\begin{lemma}
\label{lem:completion-quotient-bg}
Let $\Lambda$ be a boundedly generated abstract group.  If
$\varphi:\Lambda\to G$ has dense image in a profinite group $G$, then
$G$ is boundedly generated.  In particular, every profinite or pro-$p$
completion of $\Lambda$ is boundedly generated.  Moreover, every continuous
quotient of a boundedly generated profinite group is boundedly generated.
\end{lemma}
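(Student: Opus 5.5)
Write $\Lambda=\langle g_1\rangle\cdots\langle g_d\rangle$ and set $x_i=\varphi(g_i)\in G$. The claim to prove is
\[
G=\cl{\langle x_1\rangle}\cdots\cl{\langle x_d\rangle}.
\]
The argument is a compactness argument in three steps.

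First, each procyclic subgroup $C_i=\cl{\langle x_i\rangle}$ is closed in $G$, hence compact. The product map $C_1\times\cdots\times C_d\to G$, $(c_1,\dots,c_d)\mapsto c_1\cdots c_d$, is continuous. Therefore its image $C_1\cdots C_d$ is compact, and so closed in the Hausdorff group $G$.

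Second, since $\varphi$ is a homomorphism,
\[
\varphi(\Lambda)=\varphi(\langle g_1\rangle)\cdots\varphi(\langle g_d\rangle)=\langle x_1\rangle\cdots\langle x_d\rangle\subseteq C_1\cdots C_d .
\]
Thus $C_1\cdots C_d$ is a closed subset containing the dense set $\varphi(\Lambda)$, and so it equals $G$.

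Third, the remaining assertions follow as special cases.
\begin{itemize}
\item For the completions, apply the result to the canonical maps $\Lambda\to\widehat\Lambda$ and $\Lambda\to\widehat\Lambda_p$, which have dense image.
\item For a continuous surjection $\pi:G\to Q$ with $G=C_1\cdots C_d$, we have $Q=\pi(C_1)\cdots\pi(C_d)$. Each $\pi(C_i)$ is the continuous image of a procyclic group, namely $\cl{\langle \pi(x_i)\rangle}$, because a compact image of the closure equals the closure of the image.
\end{itemize}

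The only point needing care is the closedness of a finite product of closed subgroups. This is exactly where compactness is used, and it is the key step. In a general topological group such a product need not be closed.
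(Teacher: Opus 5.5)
Your proposal is correct and follows essentially the same argument as the paper. The paper also takes the compact, hence closed, product of the procyclic closures of the images, notes that it contains the dense image $\varphi(\Lambda)$, and obtains the quotient statement by mapping the procyclic factors forward.
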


\begin{proof}
Write $\Lambda=\langle x_1\rangle\cdots\langle x_r\rangle$.
The compact set $\overline{\langle\varphi(x_1)\rangle}\cdots \overline{\langle\varphi(x_r)\rangle}$
is closed in $G$ and contains the dense subgroup $\varphi(\Lambda)$; hence it
is all of $G$.  The assertion about continuous quotients follows by taking
the images of the procyclic factors.
\end{proof}

\begin{lemma}
\label{lem:uniform}
Let $G$ be a profinite group and let $x_1,\dots,x_r\in G$.  Then the following assertions are equivalent:

\begin{itemize}
\item[(1)] $G=\cl{\langle x_1\rangle}\cdots \cl{\langle x_r\rangle}$;
\item[(2)] for every open normal subgroup $N\triangleleft_oG$, one has
\[
        G/N=\langle x_1N\rangle\cdots \langle x_rN\rangle.
\]
\end{itemize}
\end{lemma}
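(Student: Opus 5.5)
The implication (1)$\Rightarrow$(2) is immediate: apply the continuous quotient map $\pi_N:G\to G/N$ to the equality in (1). Since $G/N$ is finite, the image of the procyclic group $\cl{\langle x_i\rangle}$ is $\langle x_iN\rangle$. Indeed, $\pi_N(\cl{\langle x_i\rangle})\subseteq\cl{\pi_N(\langle x_i\rangle)}=\langle x_iN\rangle$, because subsets of a finite discrete group are closed, and the reverse inclusion is obvious. Hence $G/N=\langle x_1N\rangle\cdots\langle x_rN\rangle$.

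For (2)$\Rightarrow$(1), the plan is a compactness argument. Put $X=\cl{\langle x_1\rangle}\cdots\cl{\langle x_r\rangle}$. This is the image of the compact space $\cl{\langle x_1\rangle}\times\cdots\times\cl{\langle x_r\rangle}$ under the continuous multiplication map, so $X$ is compact and hence closed in the Hausdorff group $G$. For a closed subset of a profinite group we have $X=\bigcap_{N\triangleleft_o G}XN$, because the open normal subgroups form a neighbourhood basis of $1$. So it suffices to show that $XN=G$ for every $N\triangleleft_oG$.

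Fix $N\triangleleft_o G$. By (2), every $g\in G$ can be written as $g\equiv x_1^{a_1}\cdots x_r^{a_r}\pmod N$ with integers $a_i$. The element $x_1^{a_1}\cdots x_r^{a_r}$ lies in $X$, so $g\in XN$. Hence $XN=G$, and intersecting over all $N$ gives $X=G$.

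The only step needing care is the identity $X=\bigcap_N XN$ for closed $X$. If $g\notin X$, there is an open neighbourhood $gN$ of $g$ with $N\triangleleft_oG$ and $gN\cap X=\emptyset$, so $g\notin XN$. No further obstacle is expected.
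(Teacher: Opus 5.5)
Your proof is correct and matches the paper's argument: $X$ is compact, hence closed, and condition (2) gives $XN=G$ for every open normal $N$, so $X$ is dense and therefore $X=G$. Your identity $X=\bigcap_N XN$ for closed $X$ is just an explicit form of the paper's density step, and you also write out the easy direction (1)$\Rightarrow$(2), which the paper leaves to the reader.
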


\begin{proof} Only the implication $(2)\Longrightarrow (1)$ needs a proof.
The product
\[
        X=\cl{\langle x_1\rangle}\cdots \cl{\langle x_r\rangle}
\]
 is a compact and hence closed subset of $G$.  If its image under the quotient homomorphism is all of $G/N$ for every open normal subgroup $N$, then $X$ is dense in $G$. Consequently, $X=G$. 
\end{proof}

This lemma is the point at which the full profinite problem differs from the pro-$p$ problem.  For $\widehat\Gamma$ one must find the same finite list of elements working in every finite quotient.

\begin{lemma}
\label{lem:open-bg}
Let $G$ be a boundedly generated profinite group.  Then every open subgroup of $G$ is boundedly generated.
\end{lemma}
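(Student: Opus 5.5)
We want to show: if $G=\cl{\langle x_1\rangle}\cdots\cl{\langle x_r\rangle}$ and $U\le_o G$, then $U$ is boundedly generated. The approach is a coset-decomposition argument, imitating the classical proof that finite-index subgroups of boundedly generated abstract groups are boundedly generated. The idea is to chop each procyclic factor into finitely many cosets of an open procyclic subgroup lying in a suitable open normal subgroup, and then absorb the coset representatives into the next factor by conjugation.

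First, replace $U$ by $N=\operatorname{Core}_G(U)$, which is open and normal of finite index $m$. It suffices to treat $N$. Indeed, $U$ is then an extension of $N$ by the finite group $U/N$. A finite group is boundedly generated, being the product of the cyclic subgroups generated by all of its elements. Hence Lemma~\ref{lem:extension-bg} transfers bounded generation from $N$ to $U$.

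Now set $C_i=\cl{\langle x_i\rangle}$ and $y_i=x_i^{m}$, so that $y_i\in N$. The closure $D_i=\cl{\langle y_i\rangle}$ is open in $C_i$, and
\[
C_i=\bigcup_{0\le k<m} x_i^{k}D_i .
\]
This holds because $\langle x_i\rangle=\bigcup_k x_i^k\langle y_i\rangle$, and the right-hand side is a finite union of compact sets, hence closed. Expanding the product, every $g\in G$ has the form $x_1^{k_1}d_1x_2^{k_2}d_2\cdots x_r^{k_r}d_r$ with $d_i\in D_i$.

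Next, move all the powers $x_i^{k_i}$ to the left by conjugation. We get
\[
g=t\cdot (D_1)^{s_1}(D_2)^{s_2}\cdots(D_r)^{s_r},
\]
where $t=x_1^{k_1}\cdots x_r^{k_r}$ and the $s_j$ are suitable partial products of the $x_i^{k_i}$, taken from a finite set $S$. Each conjugate $D_i^{s}=\cl{\langle s^{-1}y_is\rangle}$ is procyclic and contained in $N$. Consequently $G=\bigcup_{t\in T}tX$, where $T$ is finite and $X$ is the product, in a fixed order, of the finitely many procyclic groups $D_i^{s}$ for $s\in S$. To make a single ordering work for all choices of the $k_i$, list every $D_i^{s}$ with $s\in S$ once in each block $i$. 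Choosing the identity element from the unneeded factors then covers each case.

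Since $X\subseteq N$, intersecting with $N$ gives
\[
N=\bigcup_{t\in T\cap N} tX .
\]
The finitely many elements $t\in T\cap N$ are absorbed by prepending the procyclic groups $\cl{\langle t\rangle}$ for $t\in T\cap N$ to the front of the product. Thus $N$ is a finite product of procyclic subgroups.

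I expect the main obstacle to be the bookkeeping in the conjugation step. One has to make sure the order of the factors can be fixed independently of the exponents $k_i$. Listing all conjugates within each block, as above, resolves this, since identity entries can be inserted freely.
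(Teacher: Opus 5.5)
Your argument is correct and is essentially the paper's proof. Both reduce to the open normal core via Lemma~\ref{lem:extension-bg}, split each procyclic factor $C_i$ into finitely many cosets of an open procyclic subgroup lying in the core, and push the coset representatives through by conjugation. Both then absorb the finitely many resulting cases into a single finite product of procyclic subgroups.

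The differences are cosmetic. You take $\overline{\langle x_i^m\rangle}$ with representatives $x_i^k$, collect the representatives on the left, and use a block-ordered product with the groups $\overline{\langle t\rangle}$ prepended. The paper takes $C_i\cap U$ with arbitrary right-coset representatives, collects them on the right in $E_{\boldsymbol\rho}$, and concatenates the finitely many sets $B_1\cdots B_s$.
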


\begin{proof}
First assume that $U\triangleleft_o G$.  Write
\[
        G=C_1\cdots C_r,
        \qquad C_i=\cl{\langle x_i\rangle}.
\]
 Since $U$ is open and normal, $D_i=C_i\cap U$ is an open subgroup of $C_i$, hence procyclic.  Choose a finite set $R_i\subseteq C_i$ of representatives for the right cosets of $D_i$ in $C_i$.

For a tuple $\boldsymbol\rho=(\rho_1,\ldots,\rho_r)\in R_1\times\cdots\times R_r$, put
\[
        D_{1,\boldsymbol\rho}=D_1,
        \qquad
        D_{i,\boldsymbol\rho}=(\rho_1\cdots\rho_{i-1})D_i(\rho_1\cdots\rho_{i-1})^{-1}
        \quad (2\le i\le r).
\]
Every $D_{i,\boldsymbol\rho}$ is a procyclic subgroup of $U$.  Let $\mathcal R$ be the finite set of tuples for which $\rho_1\cdots\rho_r\in U$, and, for $\boldsymbol\rho\in\mathcal R$, put
\[
        E_{\boldsymbol\rho}=\cl{\langle \rho_1\cdots\rho_r\rangle}\leq U.
\]
If $u\in U$ and $u=c_1\cdots c_r$ with $c_i\in C_i$, write $c_i=d_i\rho_i$ with $d_i\in D_i$ and $\rho_i\in R_i$.  Then $\boldsymbol\rho=(\rho_1,\ldots,\rho_r)\in\mathcal R$ and
\[
\begin{aligned}
u
&= c_1\cdots c_r
 = d_1\rho_1\cdots d_r\rho_r \\
&= d_1
   \cdot (\rho_1 d_2\rho_1^{-1})
   \cdot (\rho_1\rho_2 d_3\rho_2^{-1}\rho_1^{-1})
   \cdots 
   (\rho_1\cdots\rho_{r-1}
   d_r
   \rho_{r-1}^{-1}\cdots\rho_1^{-1})
   \cdot (\rho_1\cdots\rho_r) \\
&\in D_{1, \boldsymbol{\rho}}\cdots D_{r,\boldsymbol{\rho}}E_{\boldsymbol{\rho}}.
\end{aligned}
\]
Thus $U$ is the union of finitely many sets
$B_{\boldsymbol\rho}=D_{1,\boldsymbol\rho}\cdots
D_{r,\boldsymbol\rho}E_{\boldsymbol\rho}$, each of which is a product of
procyclic subgroups contained in $U$. Enumerate these sets as
$B_1,\ldots,B_s$. Then $U=\bigcup_{j=1}^s B_j\subseteq B_1\cdots B_s$,
while the reverse inclusion holds because every factor occurring in every
$B_j$ is contained in the subgroup $U$. Hence $U=B_1\cdots B_s$ and $U$ is
boundedly generated.

For a general open subgroup $U$, let
\[
        N=\operatorname{Core}_G(U)=\bigcap_{g\in G}gUg^{-1}.
\]
Then $N\triangleleft_o G$ and $N\le U$.  By the normal case $N$ is boundedly generated, while $U/N$ is finite.  Lemma \ref{lem:extension-bg} implies that $U$ is boundedly generated.
\end{proof}

The preceding argument has an abstract analogue that will be needed later.
Indeed, the finite-index decomposition used in the proof of
Lemma~\ref{lem:open-bg} does not depend on compactness, except for replacing
cyclic subgroups by their closures. We record the abstract version
separately.

\begin{lemma}
\label{lem:finite-index-bg}
Let $G$ be a boundedly generated abstract group. Then every subgroup
of finite index in $G$ is boundedly generated.
\end{lemma}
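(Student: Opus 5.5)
The plan is to transcribe the proof of Lemma~\ref{lem:open-bg} into the abstract setting, replacing every closure $\cl{\langle x\rangle}$ by the cyclic subgroup $\langle x\rangle$. Compactness was used there only to show that the relevant sets are closed; the finite-index decomposition itself is purely algebraic. I would first treat a normal subgroup $U\trianglelefteq G$ of finite index, and then reduce the general case to this one through the normal core, exactly as in the profinite proof.

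\emph{Normal case.} Write $G=C_1\cdots C_r$ with $C_i=\langle x_i\rangle$, and put $D_i=C_i\cap U$. Since $[G:U]<\infty$, the subgroup $D_i$ has finite index in the cyclic group $C_i$, and it is cyclic as a subgroup of a cyclic group. Choose finite sets $R_i\subseteq C_i$ of coset representatives. For each tuple $\boldsymbol\rho=(\rho_1,\ldots,\rho_r)$ with $\rho_1\cdots\rho_r\in U$, form the conjugates
\[
D_{i,\boldsymbol\rho}=(\rho_1\cdots\rho_{i-1})D_i(\rho_1\cdots\rho_{i-1})^{-1}.
\]
These are cyclic subgroups of $U$, by normality of $U$. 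Also set $E_{\boldsymbol\rho}=\langle\rho_1\cdots\rho_r\rangle\le U$. The identical rewriting
\[
u=d_1\rho_1\cdots d_r\rho_r
\]
shows that $U$ is a finite union of the sets $B_{\boldsymbol\rho}=D_{1,\boldsymbol\rho}\cdots D_{r,\boldsymbol\rho}E_{\boldsymbol\rho}$. Enumerating these sets as $B_1,\ldots,B_s$, each contains $1$, so
\[
U=\bigcup_j B_j\subseteq B_1\cdots B_s\subseteq U.
\]
Hence $U$ is a finite product of cyclic subgroups.

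\emph{General case.} For an arbitrary finite-index subgroup $U$, the core $N=\operatorname{Core}_G(U)$ is normal of finite index, because $U$ has only finitely many conjugates. By the normal case, $N$ is boundedly generated. Choose representatives $t_1,\ldots,t_m$ of the cosets of $N$ in $U$. Then
\[
U=\bigcup_k t_kN\subseteq \langle t_1\rangle\cdots\langle t_m\rangle N,
\]
since each $t_k$ appears as one factor with all other factors taken equal to $1$. This is the abstract analogue of Lemma~\ref{lem:extension-bg}, with $Q=U/N$ finite. So $U$ is boundedly generated.

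There is no real obstacle. The only points needing care are that a finite-index subgroup of a cyclic group is cyclic, and that the final product $B_1\cdots B_s$ stays inside $U$. Both are immediate.
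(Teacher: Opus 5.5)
Your proposal is correct and follows the paper's proof essentially verbatim. The paper also passes to the normal core $N$ and gets bounded generation of $N$ by running the argument of Lemma~\ref{lem:open-bg} with cyclic subgroups in place of procyclic ones. It then writes $H=N\langle t_1\rangle\cdots\langle t_s\rangle$ using representatives of the finite group $H/N$. You have simply written out the transcription, including the check that each $B_j$ contains $1$, in more detail than the paper does.
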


\begin{proof}
Let $H\leq G$ have finite index and put
$N=\operatorname{Core}_G(H)$. Then $N$ is normal and of finite index
in $G$. The proof of Lemma~\ref{lem:open-bg}, with cyclic subgroups
in place of procyclic subgroups, shows that $N$ is boundedly generated.
Since $H/N$ is finite, choose representatives $t_1,\ldots,t_s\in H$ for
its elements. Then
$H=N\langle t_1\rangle\cdots\langle t_s\rangle$, so $H$ is boundedly
generated.
\end{proof}

Let us first note that, in general, a closed subgroup of a boundedly
generated profinite group need not itself be boundedly generated. In the
category of pro-$p$ groups, however, this phenomenon cannot occur.

Recall that the \emph{Prüfer rank} of a pro-$p$ group $G$ is
$\sup\{\dd(H):H\leq G\text{ is closed}\}$. By
\cite[Theorem~3.17]{DDMS1999}, a pro-$p$ group is boundedly generated if
and only if it has finite Prüfer rank. We therefore obtain the following.

\begin{theorem}
\label{thm:closed-subgroups-bg-prop}
Let $G$ be a boundedly generated pro-$p$ group. Then every closed subgroup
of $G$ is boundedly generated.
\end{theorem}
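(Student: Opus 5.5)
The plan is to deduce the statement directly from the cited equivalence \cite[Theorem~3.17]{DDMS1999}: a pro-$p$ group is boundedly generated if and only if it has finite Prüfer rank. The argument is a short chain of three observations.

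First, since $G$ is a boundedly generated pro-$p$ group, the forward direction of the equivalence gives that $G$ has finite Prüfer rank $r=\sup\{\dd(L):L\leq G\text{ closed}\}<\infty$.

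Second, let $H\leq G$ be closed. Then $H$ is again a pro-$p$ group, and every closed subgroup $L$ of $H$ is also a closed subgroup of $G$. Hence
\[
\sup\{\dd(L):L\leq H\text{ closed}\}\le r,
\]
so $H$ has finite Prüfer rank.

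Third, the reverse direction of the same equivalence, applied to $H$, shows that $H$ is boundedly generated.

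There is no real obstacle here. The only point needing care is that the cited theorem is stated for pro-$p$ groups, and that closed subgroups of pro-$p$ groups are pro-$p$, which is standard. If one wanted to avoid invoking the direction "finite rank $\Rightarrow$ bounded generation" as a black box, one could proceed as follows. Pass to a uniform open subgroup $U\le H$, which exists because $H$ has finite rank. In $U$, every element is a product $u_1^{\lambda_1}\cdots u_d^{\lambda_d}$ with $\lambda_i\in\mathbb Z_p$, so $U$ is boundedly generated. Then apply Lemma~\ref{lem:extension-bg} with the finite quotient, after replacing $U$ by its core so that the quotient is finite. Citing \cite{DDMS1999} already suffices, however.
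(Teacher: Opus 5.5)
Your argument is correct and is exactly the paper's proof: finite Prüfer rank of $G$ via \cite[Theorem~3.17]{DDMS1999}, the observation that every closed subgroup of $H$ is a closed subgroup of $G$, and the converse direction of the same theorem applied to $H$. In your optional aside, the core $N$ of the uniform subgroup $U$ need not itself be uniform; but $N$ is open in $U$, so Lemma~\ref{lem:open-bg} makes $N$ boundedly generated before you apply Lemma~\ref{lem:extension-bg} to $H/N$.
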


\begin{proof}
By \cite[Theorem~3.17]{DDMS1999}, the group $G$ has finite Prüfer rank.
Every closed subgroup $H\leq G$ has Prüfer rank at most that of $G$, and
hence again has finite Prüfer rank. Applying the same theorem to $H$ shows
that $H$ is boundedly generated.
\end{proof}

This yields the following standard consequence.

\begin{corollary}
\label{lem:free-prop-not-bg}
A free pro-$p$ group of rank at least $2$ is not boundedly generated.
\end{corollary}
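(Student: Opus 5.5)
The natural route is to argue by contradiction, combining Theorem~\ref{thm:closed-subgroups-bg-prop} with a counting argument on minimal numbers of generators in open subgroups. Let $F$ be a free pro-$p$ group of rank $r\ge 2$ and suppose $F$ were boundedly generated. By \cite[Theorem~3.17]{DDMS1999}, equivalently by the proof of Theorem~\ref{thm:closed-subgroups-bg-prop}, $F$ would then have finite Prüfer rank $R$. In particular, $\dd(U)\le R$ for every open subgroup $U\le F$. The contradiction will come from exhibiting open subgroups with arbitrarily large $\dd$.

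First I reduce to finite rank. If $r$ is infinite, or if we simply want to simplify, note that a free pro-$p$ group of rank at least $2$ surjects continuously onto the free pro-$p$ group $F_2$ of rank $2$ (send two basis elements to a basis and the rest to $1$, which is allowed for a restricted free generating set). By the last assertion of Lemma~\ref{lem:completion-quotient-bg}, bounded generation passes to continuous quotients, so it suffices to treat $F$ free pro-$p$ of finite rank $r\ge2$.

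Next I invoke the pro-$p$ Schreier index formula: an open subgroup $U$ of index $p^n$ in a free pro-$p$ group of rank $r$ is free pro-$p$ of rank $1+p^n(r-1)$ (see \cite{RZ2000}). Then $\dd(U)=1+p^n(r-1)$, since for free pro-$p$ groups $\dd$ equals $\dim_{\mathbb F_p}H^1(U,\mathbb F_p)$, which equals the rank. Because $F$ is infinite pro-$p$, it has open subgroups of index $p^n$ for every $n$, for instance the kernels of the surjections $F\to \Z/p^n$ obtained by sending one basis element to a generator and the others to $0$. Hence $\dd(U)\to\infty$, contradicting $\dd(U)\le R$.

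The only nontrivial input is the Nielsen--Schreier formula for open subgroups of free pro-$p$ groups. If one wanted to avoid it, a more elementary alternative is to count: $U/\Phi(U)$ has dimension $\dd(U)$, and one can compute $\dim H^1(U,\mathbb F_p)$ via Shapiro's lemma and the Euler characteristic, using the fact that $F$ has cohomological dimension $1$. That computation is the step needing the most care. Everything else follows from results already established in the paper.
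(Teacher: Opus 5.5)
Your proposal is correct and follows essentially the same route as the paper: you use the equivalence between bounded generation and finite Prüfer rank, and contradict it with the pro-$p$ Schreier formula $\dd(U)=1+p^n(r-1)$ for open subgroups of index $p^n$, where the paper takes these subgroups to be the preimages of $p^m\mathbb Z_p$ under an epimorphism $F\to\mathbb Z_p$. The only difference is cosmetic: in the infinite-rank case you pass to a rank-$2$ quotient, whereas the paper simply notes that $\dd(F)$ is already infinite, so $F$ has infinite Prüfer rank.
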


\begin{proof}
If $F$ has infinite rank, then it does not have finite Prüfer rank and hence
is not boundedly generated by \cite[Theorem~3.17]{DDMS1999}. Suppose that
$F$ has finite rank $r\geq2$. Choose an epimorphism $F\twoheadrightarrow
\mathbb Z_p$ and, for $m\geq1$, let $U_m$ be the inverse image of
$p^m\mathbb Z_p$. Then $[F:U_m]=p^m$. By the Schreier formula for open
subgroups of free pro-$p$ groups; see
\cite[Corollary~4.4]{Lubotzky1982},
$\dd(U_m)=1+p^m(r-1)$. Thus the numbers of generators of closed subgroups
of $F$ are unbounded, so $F$ has infinite Prüfer rank. Again
\cite[Theorem~3.17]{DDMS1999} implies that $F$ is not boundedly generated.
\end{proof}

\section{Bounded generation in proper \texorpdfstring{pro-$p$}{pro-p} amalgams}
\label{sec:thm1}


We begin with finite vertex groups. This finite model contains the basic
obstruction used in the general proper case.
\begin{theorem}
\label{thm:FPA}
Let $(P_1,A,P_2)$ be a proper amalgam of finite $p$-groups, and put
$Q=P_1\amalg_A P_2$. Then $Q$ is boundedly generated if and only if
one of the following alternatives holds:
\begin{enumerate}[label=\textup{(\alph*)}]
\item $A=P_1$ or $A=P_2$;
\item $p=[P_1:A]=[P_2:A]=2$.
\end{enumerate}
\end{theorem}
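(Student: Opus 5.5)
We treat the two directions separately, working throughout with the finite model $Q=P_1\amalg_A P_2$. By properness, the pro-$p$ Bass--Serre tree is available. Moreover, $Q$ is the pro-$p$ completion of the abstract amalgam $P_1*_A P_2$, which is virtually free and residually $p$ in the proper case.

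\emph{Sufficiency.} In case (a), say $A=P_1$, the amalgam degenerates and $Q\cong P_2$ is finite, hence boundedly generated. In case (b), $A$ has index $2$ in both $P_i$, so $A$ is normal in $P_1$ and $P_2$, and hence $A\triangleleft Q$. The quotient is $Q/A\cong C_2\amalg C_2$, the pro-$2$ infinite dihedral group $\mathbb Z_2\rtimes C_2$, which is boundedly generated, for instance as the product of a procyclic subgroup $\mathbb Z_2$ and a subgroup of order $2$. Since $A$ is finite, Lemma~\ref{lem:extension-bg} gives bounded generation of $Q$.

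\emph{Necessity.} Assume neither (a) nor (b) holds, so $A\lneq P_i$ for $i=1,2$ and $([P_1:A],[P_2:A])\neq(2,2)$. Let $N$ be the kernel of the natural map $Q\to P_1\times P_2$, or more canonically an open normal subgroup with $N\cap P_i^g=1$ for all $g$. Such an $N$ exists because the abstract amalgam of finite groups is virtually free and residually $p$, and its completion is $Q$. Then $N$ acts freely on the pro-$p$ tree $T$, so $N$ is a free pro-$p$ group (Ribes--Zalesskii). Its rank equals the rank of $N$ viewed as the fundamental group of the finite quotient graph $N\backslash T$. An Euler characteristic count gives
\[
\operatorname{rank}(N)-1=[Q:N]\left(\frac1{|A|}-\frac1{|P_1|}-\frac1{|P_2|}\right)=\frac{[Q:N]}{|A|}\left(1-\frac1{m_1}-\frac1{m_2}\right),
\]
where $m_i=[P_i:A]\ge p$. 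This quantity is positive exactly when $(m_1,m_2)\neq(2,2)$, which is precisely our excluded situation. Hence $N$ is free pro-$p$ of rank at least $2$. By Corollary~\ref{lem:free-prop-not-bg}, $N$ is not boundedly generated. By Lemma~\ref{lem:open-bg}, an open subgroup of a boundedly generated group would be boundedly generated, so $Q$ is not boundedly generated.

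\emph{Main obstacle.} The key step is producing the open free subgroup $N$ and computing its rank rigorously in the pro-$p$ setting. I would first work abstractly. The group $\Lambda=P_1*_A P_2$ has a finite-index free normal subgroup $F$ meeting no conjugate of $P_i$, obtained as the kernel of the map to a finite $p$-group quotient into which $P_1$ and $P_2$ embed; properness guarantees that such quotients exist. The Euler characteristic of $F$ is $[\Lambda:F]\chi(\Lambda)$, which gives its abstract rank. One then checks that $\widehat F_p$ coincides with the closure $N$ of $F$ in $Q$, since $F$ has $p$-power index and the pro-$p$ topologies agree. It follows that $N$ is free pro-$p$ of the same rank. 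This avoids invoking pro-$p$ tree actions directly.
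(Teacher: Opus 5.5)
Your proof is essentially the paper's. Sufficiency uses the same dihedral quotient $Q/A\cong C_2\amalg C_2$ and Lemma~\ref{lem:extension-bg}. Necessity uses the same open normal subgroup $N$ acting freely on the pro-$p$ tree, the same count $\dd(N)=1+\frac{[Q:N]}{|A|}\bigl(1-\frac1{m_1}-\frac1{m_2}\bigr)$, and the same appeal to Corollary~\ref{lem:free-prop-not-bg} and Lemma~\ref{lem:open-bg}.

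There is one slip. When $A\neq 1$ there is no natural map $Q\to P_1\times P_2$, because the two factor inclusions send $a\in A$ to $(a,1)$ and $(1,a)$ and so do not agree on $A$. Drop that description of $N$. Your second description is the right one. Such an $N$ exists simply because $P_1,P_2$ are finite subgroups of the profinite group $Q$, which is how the paper obtains it, so you do not need residual $p$-ness of the abstract amalgam.

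The abstract alternative in your last paragraph is also valid. Serre's theorem and the Euler characteristic give the rank of $F$. Since $F$ is open in the pro-$p$ topology of $P_1*_AP_2$, its closure in $Q$ is $\widehat F_p$, which is free pro-$p$ of the same rank. This route replaces the pro-$p$ Bass--Serre input (freeness of $N$ and the pro-$p$ rank formula) with classical Bass--Serre theory.
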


\begin{proof}
If $A=P_1$, then
\[
        Q=P_1\Gamp_A P_2\cong P_2,
\]
which is finite, hence boundedly generated.  The case $A=P_2$ is identical.

Assume now that $A\lneq P_1$ and $A\lneq P_2$.  Put
\[
        m_i=[P_i:A]\qquad (i=1,2).
\]
Since $P_i$ is a finite $p$-group and $A$ is a proper subgroup, $m_i\ge p$.

Suppose first that $p=m_1=m_2=2$.
Then $A$ has index $2$ in each $P_i$, hence $A\triangleleft P_i$.  Since $Q$ is topologically generated by the images of $P_1$ and $P_2$, it follows that $A\triangleleft Q$.  Therefore
\[
        Q/A\cong (P_1/A)\Gamp (P_2/A)\cong C_2\Gamp C_2.
\]
The latter group is the infinite dihedral pro-$2$ group, isomorphic to $\Z_2\rtimes C_2$.  Hence $Q/A$ is boundedly generated.  Since $A$ is finite, $A$ is boundedly generated, and Lemma \ref{lem:extension-bg} implies that $Q$ is boundedly generated.

It remains to prove non-bounded generation in all other nondegenerate cases.  
Let $T$ be the standard pro-$p$ Bass--Serre tree of $Q$.
The group acts by left translation on $T$. Then, the stabilizer of each point in $Q/P_i$ is conjugate to $P_i$ and the stabilizer of each edge is conjugate to $A$.

Because $P_1$ and $P_2$ are finite subgroups of the Hausdorff pro-$p$ group $Q$, there exists an open normal subgroup $N$ of $Q$
such that
\[
     N\cap P_1 = 1 = N\cap P_2.
\]
 Since $N$ is normal in $Q$, it also intersects trivially every conjugate of $P_1$ and $P_2$. Therefore $N$ acts freely on $T$, and hence is a free pro-$p$ group by \cite[Theorem~2.3(b)]{HZZ}.

Let $m=[Q:N]$.
The quotient graph $N\backslash T$ has $\frac{m}{|P_1|}+\frac{m}{|P_2|}$
vertices and $\frac{m}{|A|}$ edges.  By the rank formula for open free pro-$p$ subgroups of fundamental
pro-$p$ groups of finite graphs of finite $p$-groups
\cite[Remark~3.6]{HZZ}, we have

\[
\begin{aligned}
        \dd(N)
        &=1+\frac{m}{|A|}-\frac{m}{|P_1|}-\frac{m}{|P_2|}  \\
        &=1+\frac{m}{|A|}\left(1-\frac1{m_1}-\frac1{m_2}\right).
\end{aligned}
\]
In every remaining nondegenerate case the coefficient in parentheses is positive. So the rank of $N$ is at least $2$.
By Corollary \ref{lem:free-prop-not-bg}, such a subgroup is not boundedly generated.  Since bounded generation passes to open subgroups by Lemma \ref{lem:open-bg}, $Q$ itself cannot be boundedly generated.  This proves the theorem.
\end{proof}

\begin{remark}
The negative part of Theorem \ref{thm:FPA} is the crucial point.  Except in the finite degeneracies and the pro-$2$ dihedral quotient, the finite $p$-amalgam quotient has an open free pro-$p$ subgroup of rank at least two.  Thus these cases give a genuine obstruction to bounded generation.
\end{remark}


The finite model above treats finite vertex images.  We now prove that, for an arbitrary proper pro-$p$ amalgam, finite quotients detect every non-dihedral nondegenerate case.  This is the point at which the infinite-index branch becomes tractable under the properness hypothesis.

We shall use the following standard intersection fact for proper profinite amalgams.  If
\[
        G=G_1\Gamp_A G_2
\]
is proper, then the two vertex groups meet in the edge group:
\[
        G_1\cap G_2=A.
\]
Indeed, in the standard pro-$p$ Bass--Serre tree the two fundamental vertex
stabilizers are $G_1$ and $G_2$, while the edge joining them has stabilizer
$A$. An element of $G_1\cap G_2$ fixes both endpoints and hence the edge, so
$G_1\cap G_2=A$.

\begin{lemma}
\label{lem:finite-quotient-visibility}
Let $G=G_1\Gamp_A G_2$ be a proper pro-$p$ amalgamated free product. If
$[G_1:A]\geq n$ for some integer $n\geq2$, then there exists an open
normal subgroup $N\triangleleft_oG$ such that, in the finite $p$-quotient
$E=G/N$, with $P_k=G_kN/N$ for $k=1,2$ and $A_N=P_1\cap P_2$, one has
$[P_1:A_N]\geq n$.
\end{lemma}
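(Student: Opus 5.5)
Since $[G_1:A]\geq n$, I will pick elements $g_1,\dots,g_n\in G_1$ lying in pairwise distinct left cosets of $A$. The goal is an open normal $N\triangleleft_o G$ such that the images $g_iN$ lie in pairwise distinct cosets of $A_N=P_1\cap P_2$ in $P_1$. Equivalently, $g_i^{-1}g_j\notin G_2N$ for $i\neq j$, since $P_1\cap P_2\supseteq$ the image of $(G_1\cap G_2N)$. Indeed, $g_iN$ and $g_jN$ lie in the same coset of $A_N$ exactly when $g_i^{-1}g_j\in (G_2N)\cap G_1N$, and because $g_i^{-1}g_j\in G_1$, this reduces to $g_i^{-1}g_j\in G_2N$. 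So it suffices to separate the finitely many elements $h_{ij}=g_i^{-1}g_j$, which lie in $G_1\setminus A$, from $G_2N$ for one $N$.

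The key input is the intersection fact recorded just before the lemma: $G_1\cap G_2=A$. It gives $h_{ij}\notin G_2$, because $h_{ij}\in G_1\setminus A$. Now $G_2$ is a closed subgroup of the profinite group $G$, so $G_2=\bigcap_{N\triangleleft_oG}G_2N$, which is the standard fact that a closed subset is the intersection of its open-normal thickenings. Hence for each pair $(i,j)$ there is $N_{ij}\triangleleft_o G$ with $h_{ij}\notin G_2N_{ij}$. I will take $N=\bigcap_{i\neq j}N_{ij}$, which is still open normal, and then $h_{ij}\notin G_2N$ for all $i\neq j$.

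It remains to check the translation to $E=G/N$. If $g_iN\,a=g_jN$ with $a\in A_N\subseteq P_2=G_2N/N$, then $h_{ij}\in G_2N$, a contradiction. So the $g_iN$ represent $n$ distinct left cosets of $A_N$ in $P_1$, and $[P_1:A_N]\geq n$. The quotient $E$ is a finite $p$-group because $G$ is pro-$p$.

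I expect the only real content to be the step $G_1\cap G_2=A$, which uses properness via the pro-$p$ Bass--Serre tree. The lemma takes it from the discussion above, and everything else is a routine compactness argument. The other point needing care is that $A_N$ is defined as $P_1\cap P_2$, not as $AN/N$; working with $G_2N$ directly handles this possibly larger subgroup.
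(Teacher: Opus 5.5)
Your proposal is correct and follows essentially the same argument as the paper. In both, you use $G_1\cap G_2=A$ to see that each $g_i^{-1}g_j$ avoids the closed subgroup $G_2$, separate it from $G_2N_{ij}$, and intersect the finitely many $N_{ij}$; a coincidence of cosets modulo $A_N\le P_2$ would then force $g_i^{-1}g_j\in G_2N$, which is exactly the paper's contradiction step.
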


\begin{proof}
Choose $x_1,\ldots,x_n\in G_1$ representing distinct left cosets of $A$.
For $r\neq s$, we have $x_s^{-1}x_r\notin A=G_1\cap G_2$, and hence
$x_s^{-1}x_r\notin G_2$. Since $G_2$ is closed in $G$, there is an open
normal subgroup $N_{rs}\triangleleft_oG$ such that
$x_s^{-1}x_r\notin G_2N_{rs}$. Put
$N=\bigcap_{r\neq s}N_{rs}$.

We claim that the images of $x_1,\ldots,x_n$ in $P_1=G_1N/N$ represent
distinct left cosets modulo $A_N=P_1\cap P_2$. Otherwise, for some
$r\neq s$,
$(x_sN)^{-1}(x_rN)\in A_N\leq P_2$. Hence
$x_s^{-1}x_rN\in P_2=G_2N/N$, so $x_s^{-1}x_r\in G_2N$, contradicting
$N\leq N_{rs}$. Thus $[P_1:A_N]\geq n$.
\end{proof}

If $[G_1:A]$ is finite, taking $n=[G_1:A]$ in the lemma gives an
open normal subgroup $N\triangleleft_oG$ such that $[P_1:A_N]=n$.
Equivalently, $[G_1:G_1\cap G_2N]=n$. Since
$A\leq G_1\cap G_2N$ and $[G_1:A]=n$, it follows that
$G_1\cap G_2N=A$.

We can now state the proper pro-$p$ obstruction theorem.  It is the main bounded-generation classification at the level of proper pro-$p$ amalgams.

\begin{proposition}
\label{prop:Aug3}
Let $G=G_1\Gamp_A G_2$ be a proper pro-$p$ amalgamated free product, where
$A\neq G_i$ for $i=1,2$. If $G$ is boundedly generated, then $A$ is
boundedly generated and $p=[G_1:A]=[G_2:A]=2$.
\end{proposition}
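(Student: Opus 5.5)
Assume $G$ is boundedly generated. Since $A$ is a closed subgroup of the pro-$p$ group $G$, Theorem~\ref{thm:closed-subgroups-bg-prop} immediately gives that $A$ is boundedly generated. The real content is therefore the index condition, and the plan is to push the problem down to a finite quotient amalgam and apply Theorem~\ref{thm:FPA}, using that bounded generation passes to continuous quotients (Lemma~\ref{lem:completion-quotient-bg}).

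The first step is to choose an open normal subgroup $N\triangleleft_o G$ such that the finite $p$-group $E=G/N$, with $P_k=G_kN/N$ and $A_N=P_1\cap P_2$, has $[P_1:A_N]\geq n_1$ and $[P_2:A_N]\geq n_2$. Here we take $n_k=\min([G_k:A],3)$ if $p=2$, and $n_k=2$ if $p$ is odd. Lemma~\ref{lem:finite-quotient-visibility} gives such an $N$ separately for each side. Replacing $N$ by the intersection of the two subgroups preserves both inequalities: shrinking $N$ only shrinks $A_N$ relative to $P_k$, since the argument in that lemma is monotone in $N$. In particular $A_N\lneq P_k$ for $k=1,2$, so $[P_k:A_N]\geq p$.

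Next, form the finite-vertex pro-$p$ amalgam $Q=P_1\amalg_{A_N}P_2$. Amalgams of finite $p$-groups are always proper in the pro-$p$ category, so $Q$ is a proper amalgam to which Theorem~\ref{thm:FPA} applies. The inclusions $G_k\to P_k\to E$ agree on $A$, so the universal property gives a map $G\to Q$ extending $G_k\to P_k$. The images of $P_1,P_2$ generate $Q$, so this map is surjective. Since $G$ is boundedly generated, so is $Q$ by Lemma~\ref{lem:completion-quotient-bg}. Theorem~\ref{thm:FPA} then forces alternative (a) or (b). Alternative (a) is excluded by $A_N\lneq P_k$, so $p=[P_1:A_N]=[P_2:A_N]=2$.

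Finally we read off the original indices. For odd $p$ this is already a contradiction, so $p=2$. If some $[G_k:A]\geq3$, possibly infinite, then $n_k=3$ gives $[P_k:A_N]\geq3$, again a contradiction. Hence $[G_k:A]\leq2$, and since $A\neq G_k$ we get $[G_1:A]=[G_2:A]=2$. The main point needing care is the simultaneous choice of $N$ for both sides. The other point needing care is the surjection $G\to Q$ onto the free amalgam of the finite images; the subtlety is that $A$ maps into $A_N$, which may be strictly larger than $AN/N$, but only the compatibility $A\mapsto A_N$ is needed.
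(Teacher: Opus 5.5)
Your route is the paper's: you apply Lemma~\ref{lem:finite-quotient-visibility} on both sides, intersect the two open normal subgroups, map $G$ onto $Q=P_1\amalg_{A_N}P_2$, and invoke Theorem~\ref{thm:FPA}. The only organizational difference is harmless. You take $n_k=2$ for odd $p$ and extract $p=2$ from the finite amalgam. The paper instead argues by contradiction from $[G_1:A]\geq3$ and reads off $p=2$ at the end, because a finite index in a pro-$p$ group is a power of $p$.

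There is, however, one false step. The sentence ``amalgams of finite $p$-groups are always proper in the pro-$p$ category'' is wrong. It holds in the profinite category, since abstract amalgams of finite groups are virtually free, but not at a fixed prime.

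For finite vertex groups, pro-$p$ properness is equivalent to the existence of a finite $p$-group into which $P_1$ and $P_2$ embed compatibly over the edge group. One direction holds because a pro-$p$ group has an open normal subgroup meeting both finite vertex groups trivially. In Example~\ref{ex:GL2-joint-action-obstruction} no such finite $2$-group exists. The edge group $C_2\times C_2$ would be normal in the subgroup generated by the two vertex images. The conjugation action would then have image containing $\langle U,V\rangle=\GL_2(\mathbb F_2)$, which is not a $2$-group; this is the first half of the proof of Theorem~\ref{thm:normal-edge-joint-action}. So that pro-$2$ amalgam is not proper, and Theorem~\ref{thm:FPA}, whose hypothesis is exactly properness, could not be applied to it.

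In your situation the correct reason is immediate, and it is the paper's argument. The groups $P_1,P_2$ are subgroups of the finite $p$-group $E$ with $A_N=P_1\cap P_2$. The inclusions $P_k\hookrightarrow E$ therefore agree on $A_N$ and induce a map $Q\to E$ whose restriction to each $P_k$ is the inclusion. Hence the canonical maps $P_k\to Q$ are injective.

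Relatedly, the map $G\to Q$ should be induced by the composites $G_k\to P_k\to Q$, not by the maps into $E$, which would only give $G\to E$. These composites agree on $A$ because each $a\in A$ goes to the same element $aN\in A_N$ from either side, as you note at the end. With these two corrections the argument is complete.
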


\begin{proof}
The subgroup $A$ is boundedly generated by
Theorem~\ref{thm:closed-subgroups-bg-prop}.

We show that $[G_1:A]=2$. Suppose, towards a contradiction, that
$[G_1:A]\geq3$. Apply Lemma~\ref{lem:finite-quotient-visibility} to the
first vertex with $n=3$, and obtain $N_1\triangleleft_oG$. Since
$A\neq G_2$, apply the same lemma with the two vertex groups interchanged
and $n=2$, obtaining $N_2\triangleleft_oG$. Put
$N=N_1\cap N_2$, and set
$E=G/N$, $P_i=G_iN/N$, and $A_N=P_1\cap P_2$.

The coset separations obtained modulo $N_1$ and $N_2$ remain valid modulo
the finer subgroup $N$. Hence
$[P_1:A_N]\geq3$ and $[P_2:A_N]\geq2$. In particular, $A_N$ is proper in
both $P_1$ and $P_2$.

The pro-$p$ amalgamated free product $P_1\Gamp_{A_N}P_2$ is proper. Indeed, the inclusions
$P_i\hookrightarrow E$ agree on $A_N$ and therefore induce a continuous
homomorphism $P_1\Gamp_{A_N}P_2\to E$. Its restriction to each canonical
vertex group is the given inclusion $P_i\hookrightarrow E$, so the
canonical maps $P_i\to P_1\Gamp_{A_N}P_2$ are injective.

The quotient maps $G_i\to P_i$, followed by the canonical vertex maps,
agree on $A$. Hence the universal property gives a continuous epimorphism
$G\twoheadrightarrow P_1\Gamp_{A_N}P_2$. By Theorem~\ref{thm:FPA}, the
target is not boundedly generated. This contradicts the fact that bounded
generation passes to continuous quotients. Therefore $[G_1:A]=2$.

By symmetry, $[G_2:A]=2$. Finally, a finite index of a closed subgroup in
a pro-$p$ group is a power of $p$, so $p=2$.
\end{proof}

\begin{proposition}
\label{prop:dihedral-positive}
Let $G=G_1\Gamp_A G_2$ be a proper pro-$2$ amalgamated free product such that
$[G_1:A]=[G_2:A]=2$. Then $A\trianglelefteq G$ and
$G/A\cong \Z_2\rtimes C_2$,
where the nontrivial element of $C_2$ acts on $\Z_2$ by inversion.
Consequently, if $A$ is boundedly generated, then so is $G$.
\end{proposition}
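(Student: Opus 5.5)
The plan is to follow the finite case in Theorem~\ref{thm:FPA} and then take care of the infinite group $G/A$ via the universal property. First, normality. Since $[G_i:A]=2$, the subgroup $A$ is normal in each $G_i$. Hence the normalizer $N_G(A)$, which is closed because $A$ is closed, contains $G_1$ and $G_2$. These generate a dense subgroup of $G$, so $N_G(A)=G$ and $A\trianglelefteq G$.

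Next, identify the quotient. We show $G/A\cong (G_1/A)\Gamp (G_2/A)\cong C_2\Gamp C_2$, the free pro-$2$ product.

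\emph{Map from the free product.} The maps $G_i/A\to G/A$ induce a continuous homomorphism $C_2\Gamp C_2\to G/A$. It is surjective, since its image is closed and contains the images of $G_1$ and $G_2$.

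\emph{Map to the free product.} The quotient maps $G_i\to G_i/A\to C_2\Gamp C_2$ are both trivial on $A$, so they agree there. The universal property of $G_1\Gamp_A G_2$ then gives a continuous epimorphism $G\to C_2\Gamp C_2$. This map kills $A$, so it factors through $G/A$.

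\emph{Inverse.} The two composites are the identity on the images of $G_1$ and $G_2$ (respectively on the canonical factors $C_2$). These images topologically generate, so by continuity the maps are mutually inverse.

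The proof of Theorem~\ref{thm:FPA} asserts $C_2\Gamp C_2\cong\Z_2\rtimes C_2$ without proof; I would verify it here. Let $a,b$ be the two involutions. The inversion action of the generator of $C_2$ is a continuous automorphism of $\Z_2$ of order dividing $2$, so $\Z_2\rtimes C_2$ is pro-$2$. It contains the involutions $s=(0,1)$ and $t=(1,1)$, whose product $ts=(1,0)$ generates $\Z_2$ topologically. This gives an epimorphism $C_2\Gamp C_2\to\Z_2\rtimes C_2$ with $a\mapsto s$ and $b\mapsto t$.

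For the inverse, note that every finite $2$-quotient of $C_2\Gamp C_2$ is generated by two involutions, hence is dihedral of $2$-power order (or a quotient of one). Consequently the closed subgroup $\overline{\langle ab\rangle}$ is procyclic, normalized by $a$ with $a(ab)a^{-1}=(ab)^{-1}$, and of index at most $2$. Define a continuous map $\Z_2\rtimes C_2\to C_2\Gamp C_2$ by sending the generator $1\in\Z_2$ to $ab$ and the generator of $C_2$ to $a$. This is well defined because $\overline{\langle ab\rangle}$ is a pro-$2$ procyclic group, so it is a quotient of $\Z_2$, and $a$ acts on it by inversion. The two maps are inverse on the generators. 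In particular $\Z_2\to\overline{\langle ab\rangle}$ is injective: $ab$ has infinite order, since its image under the first map is $ts=(1,0)$, which generates $\Z_2$.

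Finally, $\Z_2\rtimes C_2=\overline{\langle(1,0)\rangle}\,\overline{\langle(0,1)\rangle}$ is boundedly generated. If $A$ is boundedly generated, Lemma~\ref{lem:extension-bg} applied to $1\to A\to G\to G/A\to1$ shows that $G$ is boundedly generated.

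The main obstacle is the identification of $G/A$, specifically making the two universal-property maps rigorous and checking that $ab$ has infinite order. The latter rests on exhibiting a quotient, $\Z_2\rtimes C_2$, in which the image of $ab$ has infinite order.
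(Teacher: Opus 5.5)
Your argument is correct and is essentially the paper's proof: normality of $A$ because $G_1,G_2$ topologically generate $G$, the universal-property identification $G/A\cong C_2\Gamp C_2\cong\Z_2\rtimes C_2$, and then Lemma~\ref{lem:extension-bg}; you also usefully verify $C_2\Gamp C_2\cong\Z_2\rtimes C_2$, which the paper only asserts. One small slip: your first map sends $ab$ to $st$, not $ts$. So with the multiplication convention in which $ts=(1,0)$, your second map should send $1\in\Z_2$ to $ba$ rather than $ab$; as written, the composites are conjugation by $a$ and by $s$, which are still automorphisms, so the conclusion survives.
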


\begin{proof}
Since $A$ has index $2$ in each $G_i$, it is normal in both $G_1$ and
$G_2$. As $G$ is topologically generated by $G_1$ and $G_2$, it follows
that $A\trianglelefteq G$.

The quotient maps $G_i\to G_i/A$ identify $G/A$ with the free pro-$2$
product \[
(G_1/A)\amalg(G_2/A)\cong C_2\amalg C_2.
\]
The latter is the infinite pro-$2$ dihedral group, and hence $C_2\amalg C_2\cong \Z_2\rtimes C_2$,
with $C_2$ acting on $\Z_2$ by inversion.

The final assertion follows directly from Lemma \ref{lem:extension-bg}.
\end{proof}

Combining Propositions \ref{prop:Aug3} and \ref{prop:dihedral-positive}, we have the following final characterization.

\begin{theorem}
\label{thm:proper-prop-bg-classification}
Let $G=G_1\Gamp_A G_2$
be a proper  pro-$p$ amalgamated free product and assume that $A$ is properly contained in both $G_1$ and $G_2$.
Then $G$ is boundedly generated if and only if $A$ is boundedly generated and
$p=[G_1:A]=[G_2:A]=2$.
\end{theorem}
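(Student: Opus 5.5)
The plan is to obtain the theorem by combining the two propositions just proved, one for each implication.

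For the forward implication, assume that $G$ is boundedly generated. Since $A$ is properly contained in both $G_1$ and $G_2$, the hypotheses of Proposition~\ref{prop:Aug3} hold. That proposition gives that $A$ is boundedly generated and that $p=[G_1:A]=[G_2:A]=2$. Behind this step are two earlier facts: closed subgroups of boundedly generated pro-$p$ groups are boundedly generated (Theorem~\ref{thm:closed-subgroups-bg-prop}), and a finite $p$-quotient $G/N$ is chosen by Lemma~\ref{lem:finite-quotient-visibility} so that the indices $[G_i:A]$ remain visible there. Then $G$ maps onto a nondegenerate proper amalgam of finite $p$-groups, and Theorem~\ref{thm:FPA} excludes this unless we are in the double-index-two case with $p=2$.

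For the converse, assume that $A$ is boundedly generated and that $p=[G_1:A]=[G_2:A]=2$. Proposition~\ref{prop:dihedral-positive} gives $A\trianglelefteq G$ and $G/A\cong\Z_2\rtimes C_2$. The quotient $\Z_2\rtimes C_2$ is the product of two procyclic subgroups, so it is boundedly generated. Since $A$ is also boundedly generated, Lemma~\ref{lem:extension-bg} shows that $G$ is boundedly generated.

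At this stage the argument is purely formal and has no real obstacle. The substantive work was in the non-bounded-generation direction of Theorem~\ref{thm:FPA}. There the quotient graph $N\backslash T$ yields the rank formula $\dd(N)=1+\frac{m}{|A|}\bigl(1-\frac1{m_1}-\frac1{m_2}\bigr)$ for a free open subgroup $N$. The key point is that this coefficient is positive except when $m_1=m_2=2$. It is the inequality $1-\frac1{m_1}-\frac1{m_2}>0$, for $m_i\ge p$ and $(m_1,m_2)\neq(2,2)$, that pins down the exceptional profile.
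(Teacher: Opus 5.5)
Your proposal is correct and follows the paper's own argument: the theorem is obtained there by combining Proposition~\ref{prop:Aug3} for the forward implication with Proposition~\ref{prop:dihedral-positive} (via Lemma~\ref{lem:extension-bg}) for the converse. Your sketch of what lies behind Proposition~\ref{prop:Aug3} also matches, with one small precision: Lemma~\ref{lem:finite-quotient-visibility} does not preserve the indices themselves (they may be infinite), it only makes the lower bounds $[P_1:A_N]\geq 3$ and $[P_2:A_N]\geq 2$ visible in a finite quotient.
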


\begin{theorem}
\label{thm:exact-prop-entrance-edge-classification}
Let
$\Gamma=H_1*_K H_2$
be an abstract amalgamated product, and fix a prime $p$.  Assume that the splitting has exact pro-$p$ entrance and put
$ G_i=\widehat{(H_i)}_p$ and $A=\widehat K_p$.
Assume $A$ is properly contained in $G_i$ for $i=1,2$.
Then
$\widehat\Gamma_p\text{ is boundedly generated}$
if and only if $A$ is boundedly generated and
\[
        p = [G_1:A]=[G_2:A]=2.
\]
\end{theorem}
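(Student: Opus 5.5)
This is essentially a direct corollary of Theorem~\ref{thm:proper-prop-bg-classification}. The plan is to transport that classification across the identification supplied by exact pro-$p$ entrance (Definition~\ref{def:exact-prop-entrance}).

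Exact pro-$p$ entrance means that the natural maps induce a continuous isomorphism
\[
\widehat\Gamma_p\;\cong\;G_1\Gamp_A G_2 ,
\]
and that the pro-$p$ amalgam $(G_1,A,G_2)$ is proper. In particular $A=\widehat K_p$ is identified with a closed subgroup of each $G_i=\widehat{(H_i)}_p$ via the natural maps. I would first spell this out so that the data $(G_1,A,G_2)$ satisfy the hypotheses of Theorem~\ref{thm:proper-prop-bg-classification}:
\begin{enumerate}[label=\textup{(\roman*)}]
\item the amalgam is a proper pro-$p$ amalgamated free product;
\item $A$ is properly contained in both $G_1$ and $G_2$.
\end{enumerate}
Condition (ii) is exactly the standing assumption of the statement.

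Bounded generation is invariant under isomorphism of profinite groups, since the image of a procyclic subgroup under a continuous isomorphism is procyclic. Hence $\widehat\Gamma_p$ is boundedly generated if and only if $G_1\Gamp_A G_2$ is. Theorem~\ref{thm:proper-prop-bg-classification} then gives the equivalence with ``$A$ boundedly generated and $p=[G_1:A]=[G_2:A]=2$''. Because the indices are computed inside $G_i$, and $A$ sits in $G_i$ via the injective natural map, these indices are the ones appearing in the statement.

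The only step needing care is the bookkeeping in the identification. The subgroups $G_i$ and $A$ in the statement are the intrinsic completions, whereas Theorem~\ref{thm:proper-prop-bg-classification} is applied to their images inside $\widehat\Gamma_p$. Properness and the definition of exact entrance make these images isomorphic copies compatible with the inclusions $A\le G_i$. No real obstacle is expected.
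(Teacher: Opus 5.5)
Your proposal is correct and follows the same route as the paper. The paper also treats this as an immediate corollary of Theorem~\ref{thm:proper-prop-bg-classification}, applied through the identification $\widehat\Gamma_p\cong G_1\Gamp_A G_2$ supplied by exact pro-$p$ entrance, with the standing hypothesis that $A$ is properly contained in each $G_i$.
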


\begin{proof}
This is a corollary of Theorem \ref{thm:proper-prop-bg-classification}.
\end{proof}

\section{Residual profiles of \texorpdfstring{pro-$p$}{pro-p} completions}
\label{sec:canonical-residual-properization}

Let $\Gamma=H_1*_K H_2$ be an abstract amalgamated free product, let
$P=\widehat\Gamma_p$, and let $\rho:\Gamma\to P$ be the canonical map. Put
$V_i=\overline{\rho(H_i)}$ for $i=1,2$ and $B=V_1\cap V_2$. The point of
using $B$, rather than $\overline{\rho(K)}$, is that the latter may be
properly contained in $V_1\cap V_2$.

\begin{theorem}
\label{B}
The natural inclusions induce an isomorphism
\[
        P\cong V_1\amalg_B V_2,
\]
and the amalgam on the right is proper. If $H_1$ and $H_2$ are boundedly
generated, then $P$ is boundedly generated if and only if either
$B=V_1$ or $B=V_2$, or
$p=[V_1:B]=[V_2:B]=2$.
\end{theorem}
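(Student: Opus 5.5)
The plan is to build the isomorphism from two universal properties and then feed it into Theorem~\ref{thm:proper-prop-bg-classification}. Write $Q=V_1\amalg_B V_2$ and let $\iota_i:V_i\to Q$ be the canonical maps.

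\textbf{Step 1: a map $\varphi:Q\to P$.} The inclusions $V_i\hookrightarrow P$ agree on $B=V_1\cap V_2$. The universal property of the pro-$p$ pushout therefore gives a continuous homomorphism $\varphi:Q\to P$ with $\varphi\iota_i$ equal to the inclusion. This map is surjective. Indeed, $\rho(\Gamma)$ is generated by $\rho(H_1)$ and $\rho(H_2)$ and is dense in $P$. Hence $P$ is topologically generated by $V_1\cup V_2$, and the compact subgroup $\varphi(Q)$ contains this set.

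\textbf{Step 2: an inverse $\psi:P\to Q$.} Consider the abstract homomorphisms $H_i\to V_i\xrightarrow{\iota_i}Q$. On $K$ they agree, since $\rho(K)\subseteq V_1\cap V_2=B$ and $\iota_1,\iota_2$ coincide on $B$. The universal property of $H_1*_K H_2$ gives a homomorphism $\Gamma\to Q$. Because $Q$ is pro-$p$, this factors through the pro-$p$ completion, giving a continuous $\psi:P\to Q$ with $\psi\rho|_{H_i}=\iota_i\rho|_{H_i}$.

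Both composites are the identity:
\begin{itemize}
\item We have $\varphi\psi\rho=\rho$. By density of $\rho(\Gamma)$ and continuity, $\varphi\psi=\mathrm{id}_P$.
\item For $\psi\varphi$, the two continuous maps $\psi\varphi\iota_i=\psi|_{V_i}$ and $\iota_i$ agree on the dense subset $\rho(H_i)$ of $V_i$, so they are equal. Hence $\psi\varphi$ is the identity on the topological generators $\iota_1(V_1)\cup\iota_2(V_2)$ of $Q$, and therefore $\psi\varphi=\mathrm{id}_Q$.
\end{itemize}
So $\varphi$ is an isomorphism. Properness is then immediate: $\varphi\iota_i$ is the inclusion $V_i\hookrightarrow P$, which is injective, so $\iota_i$ is injective. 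I expect the only delicate point to be this bookkeeping of continuity and density. It is routine once one insists that every identification is checked on $\rho(H_i)$ and extended by continuity.

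\textbf{Step 3: bounded generation.} By Lemma~\ref{lem:completion-quotient-bg}, each $V_i$ is boundedly generated, because it is the closure of the image of the boundedly generated group $H_i$.
\begin{itemize}
\item If $B=V_1$, then $Q\cong V_2$: the pushout of $V_1\hookrightarrow V_2$ along $\mathrm{id}_{V_1}$ is $V_2$. Hence $P$ is boundedly generated, and the case $B=V_2$ is symmetric.
\item Otherwise $B\lneq V_i$ for both $i$, and $P\cong V_1\amalg_B V_2$ is a proper nondegenerate pro-$p$ amalgam. Here $B$ is a closed subgroup of the boundedly generated pro-$p$ group $V_1$, so $B$ is boundedly generated by Theorem~\ref{thm:closed-subgroups-bg-prop}. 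Theorem~\ref{thm:proper-prop-bg-classification} then says that $P$ is boundedly generated if and only if $p=[V_1:B]=[V_2:B]=2$.
\end{itemize}
Combining the two cases gives exactly the stated dichotomy.
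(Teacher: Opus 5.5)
Your proposal is correct and follows essentially the paper's route. The paper checks directly that $P$ satisfies the universal property of $V_1\amalg_B V_2$: it lifts $f_i\circ\rho$ to $\Gamma$, extends the result to $P$ by pro-$p$ continuity, and recovers $f_i$ by density. This carries the same content as your mutually inverse maps $\varphi,\psi$, and your Step~3 coincides with the paper's reduction to Theorem~\ref{thm:proper-prop-bg-classification} via Lemma~\ref{lem:completion-quotient-bg} and Theorem~\ref{thm:closed-subgroups-bg-prop}.
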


\begin{proof}
Let $T$ be a pro-$p$ group and let $f_i:V_i\to T$ be continuous
homomorphisms that agree on $B$. The restrictions of $f_i\circ\rho$ to
$H_1$ and $H_2$ agree on $K$, so the universal property of
$H_1*_K H_2$ gives a homomorphism $f:\Gamma\to T$. This homomorphism is
continuous for the pro-$p$ topology on $\Gamma$. Indeed, if
$U\triangleleft_oT$, then
$\Gamma/f^{-1}(U)\cong f(\Gamma)U/U\leq T/U$, which is a finite
$p$-group. Hence $f$ extends uniquely to a continuous homomorphism
$P\to T$. Its restrictions to $V_i$ are $f_i$ by density. Thus $P$ has
the universal property of $V_1\amalg_BV_2$. Since the vertex maps are the
inclusions $V_i\hookrightarrow P$, this amalgam is proper.

Assume now that $H_1$ and $H_2$ are boundedly generated. By
Lemma~\ref{lem:completion-quotient-bg}, each $V_i$ is boundedly generated.
If $B=V_1$ or $B=V_2$, then $P$ is the other vertex group. Otherwise $B$
is proper in both $V_i$ and is boundedly generated by
Theorem~\ref{thm:closed-subgroups-bg-prop}. The conclusion follows from
Theorem~\ref{thm:proper-prop-bg-classification}.
\end{proof}

In particular, if $K$ is dense in $H_1$ for the pro-$p$ topology, then
$V_1\leq V_2$, so $B=V_1$ and $P=V_2$. Hence $P$ is a quotient of
$\widehat{H_2}_p$ and is boundedly generated whenever $H_2$ is boundedly
generated. The same statement holds with the two vertices interchanged.

We now express the theorem in terms of finite $p$-quotients. Set
$e_i=[V_i:B]$, where $e_i=\infty$ if $B$ is not open in $V_i$. For a finite $p$-quotient
$\theta:\Gamma\twoheadrightarrow S$, put
$Q_i(\theta)=\theta(H_i)$,
$B_\theta=Q_1(\theta)\cap Q_2(\theta)$, and
$a_i(\theta)=[Q_i(\theta):B_\theta]$. We call $\theta$ \emph{bad} if
$a_1(\theta)>1$ and $a_2(\theta)>1$, except in the case
$a_1(\theta)=a_2(\theta)=2$ (this forces $p=2$). Notice that in general
$B_\theta$ need not equal $\theta(K)$.

For two finite $p$-quotients $\eta:\Gamma\twoheadrightarrow S$ and
$\theta:\Gamma\twoheadrightarrow T$, we say that $\eta$ \emph{dominates}
$\theta$ if there is a homomorphism $h:S\to T$ such that
$\theta=h\circ\eta$.

\begin{corollary}
\label{thm:finite-bad-envelope-criterion}
For $i=1,2$,
\[
        e_i=\sup_{\theta:\Gamma\twoheadrightarrow S} a_i(\theta),
\]
where $\theta$ ranges over all finite $p$-quotients. If $H_1$ and
$H_2$ are boundedly generated, then $\widehat\Gamma_p$ is boundedly generated
if and only if $\Gamma$ has no bad finite $p$-quotient.
\end{corollary}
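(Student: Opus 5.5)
The corollary has two parts: a formula for $e_i$ as a supremum over finite $p$-quotients, and a criterion for bounded generation. I would prove the formula first and then deduce the criterion from Theorem~\ref{B}.

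\textbf{The formula for $e_i$.} Finite $p$-quotients $\theta:\Gamma\twoheadrightarrow S$ correspond exactly to open normal subgroups $N\triangleleft_o P$, via $S\cong P/N$ and $\theta=\pi_N\circ\rho$. Under this correspondence $Q_i(\theta)=\rho(H_i)N/N=V_iN/N$, because $\rho(H_i)$ is dense in $V_i$ and $N$ is open. Hence $B_\theta=V_1N/N\cap V_2N/N$.

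For the upper bound, $B_\theta$ contains the image $BN/N$ of $B$. So $a_i(\theta)\le [V_iN/N:BN/N]\le [V_i:B]=e_i$. The first inequality holds because $BN/N\le B_\theta\le V_iN/N$; the second holds because the index of an image subgroup can only drop.

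For the lower bound, by Theorem~\ref{B} the amalgam $P\cong V_1\amalg_B V_2$ is proper. Given any $n\le e_i$ with $n$ finite, Lemma~\ref{lem:finite-quotient-visibility}, applied with vertex $i$ in the role of the first vertex, produces $N$ with $[V_iN/N:A_N]\ge n$. Here $A_N$ is exactly $B_\theta$ for the corresponding $\theta$. Taking the supremum over $n$ gives $\sup_\theta a_i(\theta)\ge e_i$, which covers both the finite and the infinite case. The lemma is stated under the hypothesis $n\ge2$; the case $n=1$ is trivial.

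\textbf{The bounded-generation criterion.} By Theorem~\ref{B}, $P$ fails to be boundedly generated exactly when $e_1,e_2>1$ and $(e_1,e_2)\ne(2,2)$. Both directions then reduce to comparing $(e_1,e_2)$ with the pairs $(a_1(\theta),a_2(\theta))$.

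\emph{A bad quotient forces non-bounded generation.} Suppose $\theta$ is bad. Then $e_i\ge a_i(\theta)>1$ for both $i$. If $(e_1,e_2)=(2,2)$, then $a_i(\theta)\le 2$ forces $a_1(\theta)=a_2(\theta)=2$, which is the excluded pattern and contradicts badness. So $(e_1,e_2)\ne(2,2)$ and $P$ is not boundedly generated.

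\emph{Non-bounded generation produces a bad quotient.} Suppose $e_1,e_2>1$ and, say, $e_1\ge3$. Apply the lemma to vertex $1$ with $n=3$ and to vertex $2$ with $n=2$, and intersect the resulting open normal subgroups. As in the proof of Proposition~\ref{prop:Aug3}, the coset separations survive refinement, so the resulting $\theta$ has $a_1(\theta)\ge3$ and $a_2(\theta)\ge2$; this $\theta$ is bad. The case $e_2\ge3$ is symmetric.

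The only delicate point is the identification $Q_i(\theta)=V_iN/N$ together with the fact that $A_N$ in the lemma coincides with $B_\theta$. Once the correspondence between finite $p$-quotients and open normal subgroups of $P$ is set up carefully, both are immediate.
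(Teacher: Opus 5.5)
Your proposal is correct and essentially follows the paper's proof: an upper bound $a_i(\theta)\le e_i$ via the surjection of coset spaces, a lower bound by separating finitely many cosets of $B$ in $V_i$ modulo an open normal subgroup of $P$, and the criterion obtained by comparing $(a_1,a_2)$ with $(e_1,e_2)$ through Theorem~\ref{B} and refining two witnessing quotients to a common one. The differences are cosmetic. You cite Lemma~\ref{lem:finite-quotient-visibility}, using the properness from Theorem~\ref{B}, whereas the paper reruns the separation argument directly; that argument needs only $B=V_1\cap V_2$ and the closedness of $V_j$. You also combine the witnessing quotients by intersecting open normal subgroups, where the paper uses the diagonal map into $S_1\times S_2$.
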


\begin{proof}
Every finite $p$-quotient $\theta$ factors through an epimorphism
$\pi:P\twoheadrightarrow S$. Then $Q_i(\theta)=\pi(V_i)$ and
$B_\theta=\pi(V_1)\cap\pi(V_2)$. The natural map from the left cosets of
$B$ in $V_i$ onto the left cosets of $B_\theta$ in $\pi(V_i)$ is
surjective, so $a_i(\theta)\leq e_i$.

Conversely, suppose $e_i\geq n$ for some $n\geq2$, and choose
$x_1,\ldots,x_n\in V_i$ in distinct left cosets of $B$. If
$\{i,j\}=\{1,2\}$, then $x_s^{-1}x_r\notin V_j$ for $r\neq s$.
Since $V_j$ is closed, there is an open normal subgroup
$N_{rs}\triangleleft_oP$ such that
$x_s^{-1}x_r\notin V_jN_{rs}$. With
$N=\bigcap_{r\neq s}N_{rs}$, the images of $x_1,\ldots,x_n$ in $P/N$
lie in distinct cosets modulo the intersection of the two vertex images.
Hence the quotient $\Gamma\to P/N$ has $a_i\geq n$. This proves the
formula for $e_i$.

If a finite $p$-quotient $\eta$ dominates $\theta$, then
$a_i(\eta)\geq a_i(\theta)$ for $i=1,2$. Indeed, the factor map sends the
$i$th vertex image onto the corresponding vertex image and sends the
intersection of the two vertex images into the corresponding intersection,
so it induces a surjection of coset spaces.

Assume that $H_1$ and $H_2$ are boundedly generated. If
$\widehat\Gamma_p$ is boundedly generated, Theorem~\ref{B} shows that
either one of the $e_i$ is $1$, or $p=2$ and $e_1=e_2=2$. Since
$a_i(\theta)\leq e_i$, no bad finite $p$-quotient exists.

Conversely, suppose that $\widehat\Gamma_p$ is not boundedly generated.
Then $e_1,e_2>1$, and, after interchanging the vertices if necessary,
$e_1\geq3$. Choose finite $p$-quotients
$\theta_1:\Gamma\twoheadrightarrow S_1$ and
$\theta_2:\Gamma\twoheadrightarrow S_2$ such that
$a_1(\theta_1)\geq3$ and $a_2(\theta_2)\geq2$. Let $S$ be the image of
the diagonal homomorphism
$(\theta_1,\theta_2):\Gamma\to S_1\times S_2$, and let
$\eta:\Gamma\twoheadrightarrow S$ be the induced epimorphism. The two
coordinate projections show that $\eta$ dominates both $\theta_1$ and
$\theta_2$. Hence $a_1(\eta)\geq3$ and $a_2(\eta)\geq2$, so $\eta$ is a
bad finite $p$-quotient.
\end{proof}

The same criterion may be checked on any cofinal family of finite
$p$-quotients: domination cannot decrease either $a_i$.

\section{An arithmetic dense-folding example}
\label{sec:arithmetic-nonproper-collapse}

The proper case was settled in Section~\ref{sec:thm1}. By
Theorem~\ref{thm:intro-residual-answer}, the other positive residual
profile is dense folding. We now show that this case genuinely occurs.
More precisely, for every prime $p$ we construct an amalgam whose
pro-$p$ completion folds to a boundedly generated vertex completion,
even though the natural map on edge completions is not injective. The
same example has a non-boundedly generated full profinite completion.

\begin{theorem}
\label{C}
For every prime $p$ there exist finitely generated, linear, residually
$p$, and boundedly generated groups $K\leq H$ such that $[H:K]$ is
finite and coprime to $p$, and $K$ is dense in $H$ for the pro-$p$
topology, while the natural map
$\widehat K_p\to\widehat H_p$ is surjective but not injective.

For $\Gamma=H*_K H$, the group $\Gamma$ is finitely generated and
residually finite but not residually $p$,
$\widehat\Gamma_p\cong\widehat H_p$ is boundedly generated, whereas the
full profinite completion $\widehat\Gamma$ is not boundedly generated.
\end{theorem}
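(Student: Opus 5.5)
The plan is to take $H$ to be a principal congruence subgroup of $\mathrm{SL}_3(\mathbb Z)$ at a power of $p$, and $K\le H$ the preimage of a Sylow $p$-subgroup of a finite congruence quotient at an auxiliary prime $\ell\ne p$. The congruence subgroup property should then control all the completions.

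\emph{Choice of groups.} Put $q=p$ if $p$ is odd and $q=4$ if $p=2$, and let
\[
H=\Gamma(q)=\ker\bigl(\mathrm{SL}_3(\mathbb Z)\to\mathrm{SL}_3(\mathbb Z/q)\bigr).
\]
Fix a prime $\ell\ne p$; every such prime works, since $p$ divides $|\mathrm{SL}_3(\mathbb F_\ell)|=\ell^3(\ell^2-1)(\ell^3-1)$ (if $p\ne 3$, $p$ divides one of $\ell-1,\ell+1$ or $\ell^2+\ell+1$; if $p=3$, $3$ divides $\ell^2-1$). By strong approximation, reduction mod $\ell$ gives a surjection $\pi_\ell:H\to\mathrm{SL}_3(\mathbb F_\ell)$. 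Choose a Sylow $p$-subgroup $S\neq1$ of $\mathrm{SL}_3(\mathbb F_\ell)$ and set $K=\pi_\ell^{-1}(S)$.

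Then $[H:K]=[\mathrm{SL}_3(\mathbb F_\ell):S]$ is finite and coprime to $p$. Both groups are finitely generated and linear. $H$ is boundedly generated by Carter--Keller \cite{CarterKeller1983} together with Lemma~\ref{lem:finite-index-bg}, and hence so is $K$. $H$ is residually $p$ because it embeds in the pro-$p$ group $\ker(\mathrm{SL}_3(\mathbb Z_p)\to\mathrm{SL}_3(\mathbb Z/q))$, and $K$ is residually $p$ as a subgroup of $H$.

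\emph{Completions.} By the congruence subgroup property for $\mathrm{SL}_3(\mathbb Z)$, $\widehat H$ is the closure of $H$ in $\prod_r\mathrm{SL}_3(\mathbb Z_r)$. Its maximal pro-$p$ quotient $\widehat H_p$ is the closure $\overline H_p$ of $H$ in $\mathrm{SL}_3(\mathbb Z_p)$: the $\ell'$-adic factors for $\ell'\neq p$ are $\mathrm{SL}_3(\mathbb Z_{\ell'})$, which are perfect and so have no nontrivial $p$-quotient.

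Every finite $p$-quotient of $H$ therefore factors through some $H/\Gamma(p^m)$. By the Chinese remainder theorem, $K\supseteq\Gamma(q\ell)$ surjects onto $H/\Gamma(p^m)$, so $K$ is pro-$p$ dense in $H$ and $\widehat K_p\to\widehat H_p$ is surjective.

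On the other hand, $\widehat K$ is the product of the open subgroup $\overline H_p\le\mathrm{SL}_3(\mathbb Z_p)$, the subgroup $\pi_\ell^{-1}(S)\le\mathrm{SL}_3(\mathbb Z_\ell)$, and $\mathrm{SL}_3(\mathbb Z_r)$ for $r\ne p,\ell$. Since the kernel of $\mathrm{SL}_3(\mathbb Z_\ell)\to\mathrm{SL}_3(\mathbb F_\ell)$ is pro-$\ell$, the $\ell$-factor has maximal pro-$p$ quotient $S$. Hence
\[
\widehat K_p\cong\widehat H_p\times S,
\]
and the natural map is projection onto the first factor, which is not injective. This identification of $\widehat K_p$ via the congruence subgroup property is the step I expect to need the most care.

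\emph{The amalgam.} $\Gamma=H*_KH$ is finitely generated. It is residually finite by Baumslag's theorem on amalgams of residually finite groups along a common subgroup of finite index whose core is normal in both factors; here $\mathrm{Core}_H(K)\supseteq\Gamma(q\ell)$.

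Since $K$ is pro-$p$ dense in each copy of $H$, we have $V_1=V_2=B$. The remark after Theorem~\ref{B} gives that $P=\widehat\Gamma_p$ is a quotient of $\widehat H_p$. The folding map $\Gamma\to H$ shows that $\widehat H_p$ is also a quotient of $P$ compatibly, so $P\cong\widehat H_p$, which is boundedly generated by Lemma~\ref{lem:completion-quotient-bg}.

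For non-residual-$p$-ness, take $h\in H\setminus K$ and let $h_1,h_2$ be its images in the two copies of $H$ in $\Gamma$. Then $h_1h_2^{-1}\ne1$ in $\Gamma$ by the normal form theorem, but it maps to $1$ in $P$.

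\emph{Full profinite completion.} Reduction mod $\ell$ on both factors gives an epimorphism
\[
\Gamma\twoheadrightarrow G_0=\mathrm{SL}_3(\mathbb F_\ell)*_S\mathrm{SL}_3(\mathbb F_\ell).
\]
Here $m=[\mathrm{SL}_3(\mathbb F_\ell):S]\ge3$, since $\mathrm{SL}_3(\mathbb F_\ell)$ is not a $p$-group and a subgroup of index $2$ would be normal, contradicting the structure of $\mathrm{SL}_3(\mathbb F_\ell)$ modulo its center.

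Let $N\trianglelefteq G_0$ be a free subgroup of finite index $M$, meeting every conjugate of the vertex groups trivially. Counting vertices and edges of $N\backslash T$ as in Theorem~\ref{thm:FPA} gives
\[
\operatorname{rank}N=1+\frac{M}{|S|}\Bigl(1-\frac{2}{m}\Bigr)\ge2.
\]
Then $\widehat N$ is open in $\widehat G_0$ and is a free profinite group of rank at least $2$, so it maps onto a free pro-$p$ group of rank at least $2$. By Corollary~\ref{lem:free-prop-not-bg}, Lemma~\ref{lem:completion-quotient-bg} and Lemma~\ref{lem:open-bg}, $\widehat G_0$ is not boundedly generated. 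Since $\widehat G_0$ is a continuous quotient of $\widehat\Gamma$, Lemma~\ref{lem:completion-quotient-bg} shows that $\widehat\Gamma$ is not boundedly generated either.
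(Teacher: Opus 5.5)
\textbf{An error in the choice of $\ell$.} Your claim that every prime $\ell\neq p$ works is false. For $\ell\neq p$ one has $p\mid|\mathrm{SL}_3(\mathbb F_\ell)|=\ell^3(\ell^2-1)(\ell^3-1)$ only when the multiplicative order of $\ell$ modulo $p$ is $1$, $2$ or $3$. For $p\geq5$ this can fail. For example, take $p=5$ and $\ell=2$: then $|\mathrm{SL}_3(\mathbb F_2)|=168$, so $S=1$ and $K=\Gamma(10)$. Your own computation then gives $\widehat K_5\cong\widehat H_5\times 1$, so the natural map is an isomorphism and the non-injectivity claim fails.

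The repair is immediate: take $\ell\equiv1\pmod p$, for instance by Dirichlet or, as the paper does, a prime divisor of $2^p-1$. Then $p\mid\ell-1$ and $S\neq1$. With this choice the rest of your argument is correct, including $\widehat K_p\cong\widehat H_p\times S$ with the natural map being the first projection.

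\textbf{Comparison with the paper.} Once repaired, your proof takes a different route. The paper takes $K$ to be the preimage of a line stabilizer in $\mathrm{SL}_n(\mathbb F_\ell)$. It then tunes $n\in\{3,4\}$ so that $[H:K]=1+\ell+\cdots+\ell^{n-1}\equiv n\pmod p$ is prime to $p$. Your Sylow choice makes this coprimality automatic and allows $n=3$ for every $p$.

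For density and non-injectivity the paper stays elementary. Since $p\nmid[H:K]$, every map to a finite $p$-group has $\alpha(H)=\alpha(K)$. The character $K\to\mathbb F_\ell^\times\to C_p$ cannot extend to $H$, because $\mathrm{SL}_n(\mathbb F_\ell)$ is perfect. You instead invoke the congruence subgroup property to compute both completions. This gives the sharper statement that the kernel is exactly $S$, but it costs a deep theorem that your setup does not need. Density already follows from $p\nmid[H:K]$, which you noted. Non-injectivity follows because $\pi_\ell|_K\colon K\twoheadrightarrow S\neq1$ cannot extend to $H$: any extension would factor through the perfect group $\mathrm{SL}_3(\mathbb F_\ell)$.

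For residual finiteness you cite Baumslag, whereas the paper gives the direct core argument for the double. For the profinite completion you count ranks on the Bass--Serre tree of $G_0$ and pass to a pro-$p$ quotient. The paper uses the kernel of the folding map, which is free of rank $[Q:P]-1$, and a pro-$\ell$ quotient. These two arguments are equivalent.
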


\begin{proof}
Choose
\[
n=
\begin{cases}
3,&p\neq3,\\
4,&p=3.
\end{cases}
\]
Thus $n\geq3$ and $p\nmid n$. Let $\ell$ be a prime divisor of
$2^p-1$. Since $2^p\equiv1\pmod\ell$, the multiplicative order of $2$
modulo $\ell$ divides $p$. As $p$ is prime,
it is equal to $p$. Therefore
$p\mid |\mathbb F_\ell^\times|=\ell-1$, so
$\ell\equiv1\pmod p$. In particular, $\ell\neq p$.

Let
$H=\Gamma_n(p)=\ker(\mathrm{SL}_n(\mathbb Z)\to
\mathrm{SL}_n(\mathbb F_p))$,
the principal congruence subgroup of level $p$ (see \cite{LS}), and put
$Q=\mathrm{SL}_n(\mathbb F_\ell)$. Let
\[
\pi_\ell:\mathrm{SL}_n(\mathbb Z)\longrightarrow
\mathrm{SL}_n(\mathbb F_\ell)
\]
be reduction of matrix entries modulo $\ell$, and let
$\phi=\pi_\ell|_H$.

We claim that $\phi:H\rightarrow Q$ is surjective. Recall that
$Q$ is generated by the elementary matrices
$e_{ij}(a)=I+aE_{ij}$, where $E_{ij}$ denotes the standard matrix unit;
see, for example, \cite[Example~1.6]{Morris2007}. Given
$a\in\mathbb F_\ell$, the Chinese remainder theorem gives
$b\in\mathbb Z$ such that $b\equiv0\pmod p$ and
$b\equiv a\pmod\ell$. Hence $e_{ij}(b)\in H$ and
$\phi(e_{ij}(b))=e_{ij}(a)$. Thus $\phi$ is onto.

Fix a standard basis $e_1, \ldots, e_n$ of $\mathbb F_\ell^n$ and 
let $P$ be the setwise stabilizer in $Q$ of
$\mathbb F_\ell e_1$.  
Set $K=\phi^{-1}(P)$. Since $Q$ acts transitively on the set of one-dimensional subspaces of
$\mathbb F_\ell^n$, and this set has cardinality $\frac{\ell^n-1}{\ell-1}$, one has
\[
[H:K]=[Q:P]=\frac{\ell^n-1}{\ell-1}
=1+\ell+\cdots+\ell^{n-1}.
\]
As $\ell\equiv1\pmod p$, we have $[H:K]\equiv n\pmod p$. By the choice
of $n$, the index $[H:K]$ is coprime to $p$.

It follows that $K$ is dense in $H$ for the pro-$p$ topology. Indeed,
if $\alpha:H\to S$ is a homomorphism to a finite $p$-group, then
$[\alpha(H):\alpha(K)]$ is both a power of $p$ and a divisor of
$[H:K]$. Thus $\alpha(H)=\alpha(K)$.

We next record the basic properties of $H$ and $K$. For $r\geq1$, let
$\Gamma_n(p^r)$ denote the principal congruence subgroup of level
$p^r$. These subgroups have trivial intersection, and the map
$I+p^rX\mapsto X\pmod p$ induces an injection
\[
\Gamma_n(p^r)/\Gamma_n(p^{r+1})
\longrightarrow M_n(\mathbb F_p),
\]
where $M_n(\mathbb F_p)$ is regarded as an additive group. Hence
$H/\Gamma_n(p^m)$ is a finite $p$-group for every $m$, and $H$ is
residually $p$. Therefore its subgroup $K$ is also residually $p$.

Both $H$ and $K$ have finite index in $\mathrm{SL}_n(\mathbb Z)$,
so they are finitely generated and linear. By
\cite{CarterKeller1983}, $\mathrm{SL}_n(\mathbb Z)$ is boundedly generated
by elementary matrices. For each $i\neq j$,
\[
U_{ij}=\{e_{ij}(a):a\in\mathbb Z\}
      =\langle e_{ij}(1)\rangle
\]
is cyclic, and there are only finitely many such subgroups. Thus
$\mathrm{SL}_n(\mathbb Z)$ is boundedly generated in the sense used
in this paper. Lemma~\ref{lem:finite-index-bg} now implies that both
$H$ and $K$ are boundedly generated.

We now show that the natural map
$\widehat K_p\to\widehat H_p$ is not injective. Each $g\in P$ preserves
the line $\mathbb F_\ell e_1$, so there is a unique
$\lambda(g)\in\mathbb F_\ell^\times$ such that
$ge_1=\lambda(g)e_1$. This defines a homomorphism
$\lambda:P\to\mathbb F_\ell^\times$. It is surjective, since
$\operatorname{diag}(a,a^{-1},1,\ldots,1)\in P$ has image $a$ for every
$a\in\mathbb F_\ell^\times$.

Since $p\mid\ell-1$, the cyclic group $\mathbb F_\ell^\times$ has a
quotient group $C_p$ of order $p$. Composing this quotient with $\lambda$ and the
epimorphism $\phi|_K:K\twoheadrightarrow P$ gives a nontrivial homomorphism
$\chi:K\to C_p$.

We claim that $\chi$ does not extend to a homomorphism $H\to C_p$.
Indeed, $\chi$ vanishes on $\ker\phi$. If
$\psi:H\to C_p$ extended $\chi$, then $\psi$ would also vanish on
$\ker\phi$, and hence factor through
$Q=H/\ker\phi$. But $Q=\mathrm{SL}_n(\mathbb F_\ell)$ is perfect, as $n\geq 3$.
Thus every homomorphism $Q\to C_p$ is trivial, a contradiction.

Since $K$ is pro-$p$ dense in $H$, the natural map
$\widehat K_p\to\widehat H_p$ is surjective. If it were injective, it
would be an isomorphism. The homomorphism $\chi:K\to C_p$ would then
extend continuously to $\widehat K_p$, and through this isomorphism
would give a homomorphism $\widehat H_p\to C_p$. Restricting it to $H$
would extend $\chi$, which is impossible. Therefore
$\widehat K_p\to\widehat H_p$ is noninjective.

Now put $\Gamma=H*_K H$. If $\alpha:\Gamma\to S$ is a homomorphism to a
finite $p$-group, its restrictions to the two copies of $H$ agree on
$K$. Since $K$ is pro-$p$ dense in $H$, these restrictions agree on
all of $H$. Thus every homomorphism from $\Gamma$ to a finite $p$-group
factors through the folding map $\Gamma\to H$. Consequently
$\widehat\Gamma_p\cong\widehat H_p$, and this group is boundedly
generated by Lemma~\ref{lem:completion-quotient-bg}.

The group $\Gamma$ is not residually $p$. Indeed, since $K<H$, choose
$h\in H\setminus K$ and let $h^{(1)},h^{(2)}$ denote its copies in the two
vertex groups. By the normal form theorem,
$h^{(1)}(h^{(2)})^{-1}\neq1$ in $\Gamma$, whereas every homomorphism from
$\Gamma$ to a finite $p$-group factors through the folding map and therefore
kills this element.

It remains to consider the full profinite completion. First we show
that $\Gamma$ is residually finite. Let
$N=\operatorname{Core}_H(K)=\bigcap_{h\in H}hKh^{-1}$, the largest
normal subgroup of $H$ contained in $K$. Since $K$ has finite index in
$H$, so does $N$. The two copies of $N$ are identified in the amalgam,
and $N$ is normal in both vertex groups; hence $N\trianglelefteq\Gamma$.
Moreover, since $N\trianglelefteq H$ and $N\leq K$, the universal
property of amalgamated free products gives
\[
\Gamma/N\cong (H/N)*_{K/N}(H/N).
\]
Since $N$ has finite index in $H$, both $H/N$ and $K/N$ are finite.
Thus $\Gamma/N$ is the fundamental group of a finite graph of finite
groups, and hence is virtually free by Bass--Serre theory; see
\cite[Chapter~II, Section~2.6]{Serre1980}. In particular,
$\Gamma/N$ is residually finite.

Let $1\neq\gamma\in\Gamma$. If $\gamma\notin N$, its image in
$\Gamma/N$ can be separated from the identity by a finite quotient.
If $\gamma\in N$, then $\gamma$ is a nontrivial element of the
residually finite group $H$. Choose a finite quotient
$\rho:H\to B$ with $\rho(\gamma)\neq1$. Applying the same map $\rho$ to
both copies of $H$ gives a homomorphism $\Gamma\to B$ separating
$\gamma$. Thus $\Gamma$ is residually finite.

Finally, applying $\phi:H\twoheadrightarrow Q$ to both vertex groups
gives an epimorphism $\Gamma\twoheadrightarrow L:=Q*_P Q$.
As above, since $Q$ and $P$ are finite, $L$ is virtually free. More
precisely, let $\varepsilon:L=Q*_P Q\longrightarrow Q$
be the folding homomorphism whose restriction to each copy of $Q$ is
the identity. By \cite[Lemma~1.3]{BDGM}, the
kernel $F=\ker\varepsilon$ is a free group with $\mathrm{rank}(F)=[Q:P]-1$.
Moreover, $F\trianglelefteq L$ and
$[L:F]=|Q|$, so $F$ has finite index in $L$. Since
\[
[Q:P]=1+\ell+\cdots+\ell^{n-1}>2,
\]
we have $\mathrm{rank}(F)\geq2$. Thus $F$ is a finite-index normal nonabelian
free subgroup of $L$.

The profinite topology of $L$ induces the full profinite topology on
the finite-index subgroup $F$; see
\cite[Lemma~3.1.4]{RibesZalesskii2010}. Hence the closure $\overline F$
of $F$ in $\widehat L$ is naturally isomorphic to $\widehat F$ and is
open. If $\widehat L$ were boundedly generated, then $\widehat F$ would
be boundedly generated by Lemma~\ref{lem:open-bg}. However,
$\widehat F$ maps onto its pro-$\ell$ completion $\widehat F_\ell$,
which is a free pro-$\ell$ group of rank $\mathrm{rank}(F)\geq2$ and therefore is not
boundedly generated by Corollary~\ref{lem:free-prop-not-bg}. This is a
contradiction. Hence $\widehat L$ is not boundedly generated. Since
$\Gamma\twoheadrightarrow L$ induces
$\widehat\Gamma\twoheadrightarrow\widehat L$, the full profinite
completion $\widehat\Gamma$ is not boundedly generated.
\end{proof}

\section{Intrinsic criteria for exact \texorpdfstring{pro-$p$}{pro-p} entrance}
\label{sec:properness-entry}

The residual classification of Section~\ref{sec:canonical-residual-properization}
does not require the intrinsic completions of the original vertex and edge
groups to form a proper pro-$p$ amalgam. Nevertheless, it is useful to have
criteria ensuring that this stronger property does hold. We collect here a
small number of such criteria.

\begin{definition}
Let $S\leq R$. We say that $S$ is \emph{$p$-embedded} in $R$ if the
pro-$p$ topology of $R$ induces the full pro-$p$ topology on $S$.
Equivalently, for every normal subgroup $U\triangleleft S$ of finite
$p$-power index there is a normal subgroup $N\triangleleft R$ of finite
$p$-power index such that $N\cap S\leq U$.
\end{definition}

By \cite[Lemma~3.2.6]{RibesZalesskii2010}, $S$ is $p$-embedded in $R$
if and only if the inclusion $S\hookrightarrow R$ induces an injection
$\widehat S_p\to\widehat R_p$.

\begin{theorem}
\label{thm:pembedded-properness}
Let $\Gamma=H_1*_K H_2$. Then the following are equivalent.
\begin{enumerate}[label=\textup{(\alph*)}]
\item The splitting has exact pro-$p$ entrance in the sense of
Definition~\ref{def:exact-prop-entrance}; equivalently, the natural maps
identify
$\widehat\Gamma_p
\cong
\widehat{(H_1)}_p\Gamp_{\widehat K_p}\widehat{(H_2)}_p$
as a proper pro-$p$ amalgamated free product.
\item The subgroup $K$ is $p$-embedded in both $H_1$ and $H_2$, and
$H_1,H_2$ are $p$-embedded in $\Gamma$.
\end{enumerate}
In this case, taking all closures in $\widehat\Gamma_p$, one has
$\overline{H_1}\cap\overline{H_2}=\overline K$.
\end{theorem}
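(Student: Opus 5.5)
The argument uses Theorem~\ref{B}, which already identifies $P=\widehat\Gamma_p$ with the proper amalgam $V_1\amalg_B V_2$, where $V_i=\overline{\rho(H_i)}$ and $B=V_1\cap V_2$. Exact entrance will be reduced to the statement that the natural maps $\widehat{(H_i)}_p\to V_i$ and $\widehat K_p\to\overline{\rho(K)}$ are isomorphisms and that $\overline{\rho(K)}=B$. The key tool is \cite[Lemma~3.2.6]{RibesZalesskii2010}: $S\leq R$ is $p$-embedded if and only if $\widehat S_p\to\widehat R_p$ is injective. We also use the observation that the closure of $S$ in $\widehat R_p$ is always the image of $\widehat S_p$.

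\textbf{(a)$\Rightarrow$(b).} Properness means the canonical maps $\widehat{(H_i)}_p\to\widehat\Gamma_p$ are injective, so $H_i$ is $p$-embedded in $\Gamma$. Here the maps $\widehat{(H_i)}_p\to\widehat\Gamma_p$ are identified with the canonical vertex maps of the amalgam via the given identification. For $K$ we need $\widehat K_p\to\widehat{(H_i)}_p$ injective. The amalgam in (a) is formed along these maps, and the definition of a pro-$p$ amalgam requires $A=\widehat K_p$ to be a common closed subgroup. I therefore read (a) as including injectivity of $\widehat K_p\to\widehat{(H_i)}_p$. If the paper's convention allows non-injective edge maps, I would argue instead that properness forces the image of $\widehat K_p$ in each vertex group to embed in $\widehat\Gamma_p$. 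I would then compare with the composite $\widehat K_p\to\widehat{(H_1)}_p\to\widehat\Gamma_p$ and conclude that the kernel must vanish, using that the amalgam is the pushout of the images. Either way $K$ is $p$-embedded in each $H_i$.

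\textbf{(b)$\Rightarrow$(a).} Assuming (b), the lemma gives that $\widehat{(H_i)}_p\to\widehat\Gamma_p$ is injective with image $V_i$, and that $\widehat K_p\to\widehat{(H_i)}_p$ is injective. Composing, $\widehat K_p\cong\overline{\rho(K)}$. So $(V_1,\overline{\rho(K)},V_2)$ is a pro-$p$ amalgam isomorphic to $(\widehat{(H_1)}_p,\widehat K_p,\widehat{(H_2)}_p)$.

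Next I would verify the universal property of $\widehat\Gamma_p$ for this amalgam, exactly as in the proof of Theorem~\ref{B}. Take maps $f_i:V_i\to T$ agreeing only on $\overline{\rho(K)}$. Then $f_i\circ\rho|_{H_i}$ agree on $K$ and glue to a homomorphism $\Gamma\to T$. This homomorphism is continuous for the pro-$p$ topology and so extends to $P$, with the correct restrictions by density. Hence $P\cong V_1\amalg_{\overline{\rho(K)}}V_2$ with injective vertex maps, which is (a).

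\textbf{The final claim.} Properness of the amalgam with edge group $\overline K$, together with the intersection fact for proper pro-$p$ amalgams recorded before Lemma~\ref{lem:finite-quotient-visibility} ($G_1\cap G_2=A$ via the pro-$p$ Bass--Serre tree), gives $\overline{H_1}\cap\overline{H_2}=\overline K$.

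\textbf{Main obstacle.} The delicate step is the $K$-part of (a)$\Rightarrow$(b), namely deciding exactly what the definition of a proper pro-$p$ amalgam demands of the edge map. This is the first thing I would check before settling the argument.
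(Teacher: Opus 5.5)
Your argument is correct and is essentially the paper's. For (a)$\Rightarrow$(b) the paper also reads off injectivity and applies \cite[Lemma~3.2.6]{RibesZalesskii2010}. For (b)$\Rightarrow$(a) it constructs mutually inverse maps between $\widehat\Gamma_p$ and $\widehat{(H_1)}_p\Gamp_{\widehat K_p}\widehat{(H_2)}_p$ that agree on the dense vertex images; this is the same universal-property check you carry out inside $P$ in the manner of Theorem~\ref{B}. The final claim comes from the same Bass--Serre intersection fact.

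Your primary reading of (a) is the one the paper uses: a pro-$p$ amalgam has $A$ as a common closed subgroup, so injectivity of $\widehat K_p\to\widehat{(H_i)}_p$ is built into (a). You should, however, discard the fallback argument. Injectivity of the vertex maps cannot force that kernel to vanish, and Theorem~\ref{C} gives a counterexample. For $\Gamma=H*_KH$ there, the pushout of $\widehat H_p\leftarrow\widehat K_p\to\widehat H_p$ along the surjective, non-injective map is $\widehat H_p\cong\widehat\Gamma_p$, with bijective vertex maps. Yet $K$ is not $p$-embedded in $H$.
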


\begin{proof}
Assume (b). Since $K$ is $p$-embedded in $H_i$, the natural maps
$\widehat K_p\to\widehat{(H_i)}_p$ are injective. Hence the pro-$p$
amalgamated free product
$P=\widehat{(H_1)}_p\Gamp_{\widehat K_p}\widehat{(H_2)}_p$
is defined with $\widehat K_p$ identified with a closed subgroup of
each vertex group.

The maps $H_i\to P$ agree on $K$ and therefore induce a homomorphism
$\Gamma\to P$. Since $P$ is pro-$p$, this extends to a continuous
homomorphism $\widehat\Gamma_p\to P$. Conversely, the inclusions
$H_i\hookrightarrow\Gamma$ induce maps
$\widehat{(H_i)}_p\to\widehat\Gamma_p$, and these agree on
$\widehat K_p$. The universal property of the pro-$p$ amalgam gives
a map $P\to\widehat\Gamma_p$. The two maps are inverse on the dense
images of the vertex groups, hence are inverse isomorphisms.
The assumed $p$-embeddedness of $H_i$ in $\Gamma$ gives properness.

Conversely, properness gives injectivity of
$\widehat K_p\to\widehat{(H_i)}_p$ and
$\widehat{(H_i)}_p\to\widehat\Gamma_p$. The equivalence with
$p$-embeddedness follows from
\cite[Lemma~3.2.6]{RibesZalesskii2010}.

Finally, in a proper pro-$p$ amalgam the two vertex stabilizers of the
fundamental edge in the Bass--Serre tree intersect precisely in the edge
stabilizer. Hence $\overline{H_1}\cap\overline{H_2}=\overline K$.
\end{proof}

We shall use three elementary facts about $p$-embeddedness. First, it is
transitive: if $T\leq S\leq R$, with $T$ $p$-embedded in $S$ and $S$
$p$-embedded in $R$, then $T$ is $p$-embedded in $R$. Indeed, given a
normal subgroup $U\triangleleft T$ of finite $p$-power index, first choose
$V\triangleleft S$ of finite $p$-power index with $V\cap T\leq U$, and
then choose $N\triangleleft R$ of finite $p$-power index with
$N\cap S\leq V$.

Second, every retract is $p$-embedded. If $\rho:R\to S$ is a retraction
and $U\triangleleft S$ has finite $p$-power index, then
$\rho^{-1}(U)\triangleleft R$ has finite $p$-power index and
$\rho^{-1}(U)\cap S=U$. Finally, if $V$ is open in the pro-$p$ topology
of $R$, then the pro-$p$ topology of $R$ induces the full pro-$p$ topology
on $V$; see \cite[Lemma~3.1.4]{RibesZalesskii2010}. Consequently, if
$H\leq V\leq\Gamma$, where $V$ is open in the pro-$p$ topology of
$\Gamma$ and $H$ is a retract of $V$, then $H$ is $p$-embedded in
$\Gamma$. We use the term \emph{$p$-virtual retract} of $\Gamma$ for
such a subgroup $H$.

For nilpotent vertex groups the edge condition is automatic.
By \cite[Theorem~3.2]{Morales2024}, every subgroup of a finitely
generated nilpotent group is $p$-embedded.

\begin{corollary}
\label{cor:nilpotent-entrance}
Let $\Gamma=H_1*_K H_2$, where $H_1,H_2$ are finitely generated
nilpotent groups. If both vertex groups are $p$-embedded in $\Gamma$
---in particular, if they are $p$-virtual retracts of $\Gamma$---then
$\widehat\Gamma_p\cong
\widehat{(H_1)}_p\Gamp_{\widehat K_p}\widehat{(H_2)}_p$
as a proper pro-$p$ amalgamated free product. Hence the bounded-generation classification of
Theorem~\ref{thm:exact-prop-entrance-edge-classification} applies.
In the exceptional pro-$2$ double-index-two case,
$\widehat K_2$ is automatically boundedly generated.
\end{corollary}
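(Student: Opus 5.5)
The plan is to reduce to Theorem~\ref{thm:pembedded-properness} and then check boundedness of the edge completion directly.

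\emph{Exact entrance.} By \cite[Theorem~3.2]{Morales2024}, $K$ is $p$-embedded in each finitely generated nilpotent group $H_i$. The vertex groups are $p$-embedded in $\Gamma$ by hypothesis. When they are $p$-virtual retracts, this follows from the three elementary facts recorded above: a retract is $p$-embedded in the open subgroup $V$, open subgroups are $p$-embedded, and $p$-embeddedness is transitive. Thus condition (b) of Theorem~\ref{thm:pembedded-properness} holds. Hence the splitting has exact pro-$p$ entrance, so
\[
\widehat\Gamma_p\cong\widehat{(H_1)}_p\Gamp_{\widehat K_p}\widehat{(H_2)}_p
\]
properly. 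Once the nondegeneracy assumption $\widehat K_p\neq \widehat{(H_i)}_p$ is in force, Theorem~\ref{thm:exact-prop-entrance-edge-classification} applies verbatim.

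\emph{Bounded generation of $\widehat K_2$.} Since $K$ is a subgroup of the finitely generated nilpotent group $H_1$, it is itself finitely generated nilpotent. A finitely generated nilpotent group is polycyclic, so it has a subnormal series with cyclic factors. Choosing lifts $k_1,\dots,k_s$ of generators of the successive factors gives $K=\langle k_1\rangle\cdots\langle k_s\rangle$, by induction along the series. Thus $K$ is boundedly generated, and Lemma~\ref{lem:completion-quotient-bg} shows that $\widehat K_2$ is boundedly generated.

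There is an alternative route that avoids the abstract group. Since $\widehat K_2$ embeds as a closed subgroup of $\widehat{(H_1)}_2$, and $H_1$ is boundedly generated by the same polycyclic argument, Theorem~\ref{thm:closed-subgroups-bg-prop} also gives the claim.

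\emph{Main point to check.} The only step needing care is the claim that the retract hypothesis yields $p$-embeddedness. Here one must check that the retraction lives on a subgroup $V$ that is open in the pro-$p$ topology of $\Gamma$. This is exactly the definition of a $p$-virtual retract, so no real obstacle remains. The rest is bookkeeping with the earlier results.
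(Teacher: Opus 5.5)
Your proposal is correct and follows the paper's proof: Morales's theorem makes $K$ $p$-embedded in each $H_i$, so condition (b) of Theorem~\ref{thm:pembedded-properness} holds and exact entrance follows. The one small difference is in the last claim. The paper simply asserts that the finitely generated nilpotent pro-$2$ group $\widehat K_2$ is boundedly generated, whereas your polycyclic decomposition $K=\langle k_1\rangle\cdots\langle k_s\rangle$ together with Lemma~\ref{lem:completion-quotient-bg} proves this directly from the paper's own lemmas.
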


\begin{proof} By \cite[Theorem~3.2]{Morales2024}, every subgroup of a finitely
generated nilpotent group is $p$-embedded.
So Theorem~\ref{thm:pembedded-properness} applies. Since
subgroups of finitely generated nilpotent groups are finitely generated
nilpotent, $\widehat K_2$ is a finitely generated nilpotent pro-$2$
group and hence is boundedly generated.
\end{proof}

We next record the finite normal-edge criterion that will be useful
below. Let $(P_1,A,P_2)$ be an amalgam of finite $p$-groups. We say that it
has a \emph{strong finite $p$-envelope} if there are a finite $p$-group $E$
and embeddings $\iota_i:P_i\hookrightarrow E$ which agree on $A$ and
satisfy $\iota_1(P_1)\cap\iota_2(P_2)=\iota_1(A)$.

A \emph{central $p$-filtration} of a finite $p$-group $P$ is a finite chain
$P=P_0\geq P_1\geq\cdots\geq P_c=1$ such that
$[P,P_r]P_r^p\leq P_{r+1}$ for every $r<c$, where $P_r^p$ denotes the subgroup generated by the $p$th powers of
elements of $P_r$. Two central $p$-filtrations
\[
\mathcal P_i:\quad P_i=P_{i,0}\geq P_{i,1}\geq\cdots\geq P_{i,c_i}=1,
\qquad i=1,2,
\]
are \emph{compatible over $A$} if, after padding the shorter filtration
by terminal copies of $1$ when necessary, they have the same length, i.e., $c_1=c_2$, and
\[
A\cap P_{1,r}=A\cap P_{2,r}
\]
at every level $r$.

By \cite[Theorem~2.1]{AschenbrennerFriedl2013}, the existence of such
compatible central $p$-filtrations is equivalent to the existence of a strong finite
$p$-envelope. Higman's main theorem \cite[pp.~301--305]{Higman1964} identifies the
same finite-amalgam condition with residual $p$-finiteness of $P_1*_A P_2$.

\begin{theorem}
\label{thm:normal-edge-joint-action}
Let $(P_1,A,P_2)$ be an amalgam of finite $p$-groups with
$A\trianglelefteq P_i$ for $i=1,2$, and put $\Omega$ to be the subgroup of $\Aut(A)$ generated by the actions of $P_i$ on $A$ by conjugation.
Then $(P_1,A,P_2)$ has a strong finite $p$-envelope if and only if
$\Omega$ is a $p$-group.
\end{theorem}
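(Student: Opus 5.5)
We use the compatible-filtration criterion of \cite[Theorem~2.1]{AschenbrennerFriedl2013}. A strong finite $p$-envelope exists if and only if $P_1$ and $P_2$ admit central $p$-filtrations that are compatible over $A$. So it suffices to show that such filtrations exist exactly when $\Omega$ is a $p$-group.

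\emph{Necessity.} Suppose $E$ is a strong finite $p$-envelope, and identify $P_1$, $P_2$ and $A$ with their images in $E$.
\begin{itemize}
\item Since $A\trianglelefteq P_1$ and $A\trianglelefteq P_2$, the subgroup $A$ is normalized by $M=\langle P_1,P_2\rangle\le E$.
\item Conjugation gives a homomorphism $M\to\Aut(A)$. Its image contains the actions of $P_1$ and $P_2$, and it is generated by them, so the image is exactly $\Omega$.
\item Thus $\Omega$ is a quotient of the finite $p$-group $M$, hence a $p$-group.
\end{itemize}
This direction is immediate.

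\emph{Sufficiency.} Assume $\Omega$ is a $p$-group, and form the semidirect product $A\rtimes\Omega$, a finite $p$-group. The plan is to use a filtration of $A$ that is invariant under all of $\Omega$, and then extend it to each $P_i$.

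First, take the lower exponent-$p$ central series of $A\rtimes\Omega$ and intersect each term with $A$. This gives a chain of subgroups $A=A_0\ge A_1\ge\cdots\ge A_c=1$ with the following properties:
\begin{itemize}
\item each $A_r$ is $\Omega$-invariant;
\item $[\Omega,A_r]\,[A,A_r]\,A_r^p\le A_{r+1}$.
\end{itemize}
Strict descent to $1$ holds because $A\rtimes\Omega$ is a finite $p$-group. The key point is that this single filtration of $A$ is stable under conjugation by both $P_1$ and $P_2$.

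Next, extend this chain to a central $p$-filtration of each $P_i$ whose trace on $A$ is exactly $(A_r)$. The action of $P_i$ on $A$ factors through the $p$-group $\Omega$, and $P_i/A$ is a $p$-group.

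\begin{itemize}
\item \textbf{Top part.} Start with a central $p$-series of $P_i/A$, pulled back to $P_i$; it runs from $P_i$ down to $A$. These terms all contain $A$, so they meet $A$ in $A=A_0$. We must therefore pad the chain for $A$ with repeated copies of $A$ at the top. To keep the two filtrations compatible, we use the same padding length on both sides, namely the maximum of the two series lengths.
\item \textbf{Bottom part.} Continue below $A$ with the chain $A_1\ge A_2\ge\cdots$. The central $p$-condition at the junction requires $[P_i,A]A^p\le A_1$. This holds because $P_i$ acts through $\Omega$, so $[P_i,A]\le[\Omega,A]A'\le A_1$.
\item \textbf{Lower levels.} At each later level we need $[P_i,A_r]A_r^p\le A_{r+1}$. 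This follows from $[\Omega,A_r][A,A_r]A_r^p\le A_{r+1}$, since conjugation by $P_i$ on $A$ is given by elements of $\Omega$.
\end{itemize}

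Compatibility then holds levelwise by construction. In the top block both filtrations meet $A$ in $A$. In the bottom block both filtrations equal $A_r$.

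The main obstacle is the top block. A central $p$-series of $P_i/A$ may have different lengths for $i=1,2$, so the shorter one must be padded. One option is to repeat the term $A$. This is harmless only if the definition of a central $p$-filtration allows non-strict chains with repeated terms. If it does not, instead insert extra repeated terms of the top part, which are still central $p$ steps. In either case the traces on $A$ stay equal to $A$ throughout the top block.

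Once compatible central $p$-filtrations are in hand, \cite[Theorem~2.1]{AschenbrennerFriedl2013} produces the strong finite $p$-envelope.
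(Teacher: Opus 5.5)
Your proposal is correct and follows essentially the same route as the paper. For necessity, you use the conjugation map from the envelope onto a $p$-group containing $\Omega$. For sufficiency, you intersect the lower exponent-$p$ central series of $A\rtimes\Omega$ with $A$, place pulled-back central $p$-filtrations of $P_i/A$ (padded to equal length) on top, and invoke the Aschenbrenner--Friedl criterion. Your worry about strictness is moot, since the paper's notion of central $p$-filtration allows repeated terms; the paper itself pads with terminal copies of $1$, that is, repeated copies of $A$ after pulling back. Note also that your fallback of repeating top terms would not produce a strict chain anyway.
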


\begin{proof}
Suppose first that the amalgam embeds strongly in a finite $p$-group
$E$, which we may assume is generated by the two vertex images.
Since $A$ is normal in both vertex groups, it is normal in $E$.
Therefore the image of the conjugation homomorphism
$E\to\operatorname{Aut}(A)$ is a finite $p$-group, and it contains
$\Omega$. Hence $\Omega$ is a $p$-group.

Conversely, suppose that $\Omega$ is a $p$-group and put
$S=A\rtimes\Omega$. Then $S$ is a finite $p$-group. Let
\[
S=S_0\geq S_1\geq\cdots\geq S_c=1
\]
be the lower exponent-$p$ central series of $S$, so that $S_{r+1}=[S,S_r]S_r^p$
for every $r<c$. Put $A_r=A\cap S_r$. Then
\[
A=A_0\geq A_1\geq\cdots\geq A_c=1.
\]
We claim that $[A_r,P_i]A_r^p\leq A_{r+1}$ for $i=1,2,\ r<c$.
Indeed, the conjugation action of each $P_i$ on $A$ factors through
its image in $\Omega$. Hence, for $x\in A_r$ and $g\in P_i$, if
$\omega\in\Omega$ denotes the automorphism of $A$ induced by $g$, then $[x,g]=[x,\omega]$.
Since $A_r\leq S_r$ and $\Omega\leq S$, it follows that
\[
[A_r,P_i]\leq A\cap[S_r,S]
\leq A\cap S_{r+1}
=A_{r+1}.
\]
Also,
\[
A_r^p\leq A\cap S_r^p
\leq A\cap S_{r+1}
=A_{r+1}.
\]
Therefore $[A_r,P_i]A_r^p\leq A_{r+1}$, as required.

For each $i=1,2$, choose a central $p$-filtration
\[
P_i/A=\overline P_{i,0}\geq \overline P_{i,1}\geq\cdots
\geq \overline P_{i,d_i}=1.
\]
After padding the shorter filtration by terminal copies of $1$, we may
assume that $d_1=d_2=d$. Let $P_{i,r}$ be the full preimage of
$\overline P_{i,r}$ under the quotient map $P_i\to P_i/A$. Then
\[
P_i=P_{i,0}\geq P_{i,1}\geq\cdots\geq P_{i,d}=A.
\]
Since the filtration of $P_i/A$ is central, we have
$[P_i,P_{i,r}]P_{i,r}^p\leq P_{i,r+1}$ for every $r<d$. Moreover,
each $P_{i,r}$ contains $A$, so
$A\cap P_{1,r}=A=A\cap P_{2,r}$ for $0\leq r\leq d$.

Now append to both chains the common filtration
$A=A_0\geq A_1\geq\cdots\geq A_c=1$ constructed above. Since
$[A_r,P_i]A_r^p\leq A_{r+1}$ for $i=1,2$ and $r<c$, the resulting
chains are central $p$-filtrations of $P_1$ and $P_2$. They are
compatible over $A$: before reaching $A$, both chains intersect $A$
in $A$, while from that point onward they coincide with the same
filtration $A=A_0\geq\cdots\geq A_c=1$. The Aschenbrenner--Friedl
criterion therefore yields a strong finite $p$-envelope.
\end{proof}

The joint condition cannot be checked separately on the two vertex
actions.

\begin{example}
\label{ex:GL2-joint-action-obstruction}
Let $A=C_2\times C_2$ and let
\[
U=\begin{pmatrix}
1&1\\
0&1
\end{pmatrix},
\qquad
V=\begin{pmatrix}
1&0\\
1&1
\end{pmatrix}
\]
in $\GL_2(\mathbb F_2)$. Put
$P_1=A\rtimes_U C_2$ and $P_2=A\rtimes_V C_2$.
Then $[P_i:A]=2$, but
$\langle U,V\rangle=\GL_2(\mathbb F_2)\cong S_3$,
which is not a $2$-group. Hence
$(P_1,A,P_2)$ has no strong finite $2$-envelope by
Theorem~\ref{thm:normal-edge-joint-action}. Thus the double-index-two
condition alone does not guarantee intrinsic pro-$2$ entrance.
\end{example}

The preceding finite criterion yields a useful intrinsic entrance
theorem for normal edge groups.

\begin{theorem}
\label{thm:normal-finite-p-extension-entry}
Let $\Gamma=H_1*_K H_2$, where $K\trianglelefteq H_i$ and $H_i/K$ is a
finite $p$-group for $i=1,2$. Assume that $K$ has a cofinal family
$\mathcal L$ of characteristic subgroups of finite $p$-power index, in the
sense that for every normal subgroup $U\triangleleft K$ of finite
$p$-power index there is $L\in\mathcal L$ with $L\leq U$, and assume
that, for every $L\in\mathcal L$, the subgroup of
$\operatorname{Aut}(K/L)$ generated by the conjugation actions of $H_1$
and $H_2$ is a $p$-group. Then
$\widehat\Gamma_p\cong
\widehat{(H_1)}_p\Gamp_{\widehat K_p}\widehat{(H_2)}_p$
as a proper pro-$p$ amalgam.
\end{theorem}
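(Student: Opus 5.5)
By Theorem~\ref{thm:pembedded-properness}, it suffices to verify condition (b): $K$ is $p$-embedded in each $H_i$, and each $H_i$ is $p$-embedded in $\Gamma$. The plan is to produce, for each $L\in\mathcal L$, a finite $p$-quotient of $\Gamma$ in which the finite amalgam $(H_1/L,K/L,H_2/L)$ embeds strongly. Both embedding conditions then follow from the cofinality of $\mathcal L$.

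Fix $L\in\mathcal L$. Since $L$ is characteristic in $K$ and $K\trianglelefteq H_i$, the subgroup $L$ is normal in $H_1$ and $H_2$, hence normal in $\Gamma$. Moreover
\[
\Gamma/L\cong (H_1/L)*_{K/L}(H_2/L).
\]
Put $P_i=H_i/L$ and $A=K/L$. These are finite $p$-groups, since $[H_i:L]=[H_i:K][K:L]$ is a power of $p$. The edge group $A$ is normal in each $P_i$. The subgroup of $\Aut(A)$ generated by the conjugation actions of $P_1$ and $P_2$ is the image of the actions of $H_1$ and $H_2$ on $K/L$, which is a $p$-group by hypothesis. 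Theorem~\ref{thm:normal-edge-joint-action} therefore yields a finite $p$-group $E_L$ with embeddings $\iota_i:P_i\hookrightarrow E_L$ that agree on $A$ and satisfy $\iota_1(P_1)\cap\iota_2(P_2)=\iota_1(A)$. By the universal property of the amalgam, we get a homomorphism $\theta_L:\Gamma\to E_L$ whose kernel meets each $H_i$ exactly in $L$.

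First, $H_i$ is $p$-embedded in $\Gamma$. Let $U\triangleleft H_i$ have finite $p$-power index. Since $K\trianglelefteq H_i$, the group $U\cap K$ has finite $p$-power index in $K$, so cofinality gives $L\in\mathcal L$ with $L\le U\cap K\le U$. Then $N=\ker\theta_L$ is normal of finite $p$-power index in $\Gamma$, and $N\cap H_i=L\le U$.

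Second, $K$ is $p$-embedded in $H_i$. Given $U\triangleleft K$ of finite $p$-power index, choose $L\le U$ in $\mathcal L$. Then $L\triangleleft H_i$ has finite $p$-power index and $L\cap K=L\le U$. (This also follows by transitivity from the previous step.)

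Condition (b) now holds, and Theorem~\ref{thm:pembedded-properness} gives the asserted isomorphism as a proper pro-$p$ amalgam.

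The only delicate point is in the first step. We need $U\cap K$ to be normal of $p$-power index in $K$; this holds because $U$ is normal in $H_i$. We also need the hypothesis about $\Aut(K/L)$ to refer to the full groups $H_i$ rather than to $H_i/L$; this is harmless, since $L$ acts trivially on $K/L$ only up to inner automorphisms, and both the $H_i$ and the $P_i$ give the same image in $\Aut(K/L)$. The strong envelope property $\iota_1(P_1)\cap\iota_2(P_2)=\iota_1(A)$ is not needed beyond the injectivity of the $\iota_i$.
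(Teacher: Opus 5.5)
Your proof is correct and follows the paper's argument. You reduce to condition (b) of Theorem~\ref{thm:pembedded-properness}, use that each $L\in\mathcal L$ is normal of $p$-power index in $H_i$ to get $p$-embeddedness of $K$, and use the strong envelope of Theorem~\ref{thm:normal-edge-joint-action} to obtain $\Gamma\to E_L$ whose kernel meets $H_i$ exactly in $L$. (Choosing $L$ for each $U$ separately rather than below $K\cap U_1\cap U_2$ changes nothing.)

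Two small slips in your side remarks, neither affecting the proof:
\begin{enumerate}
\item $L$ acts exactly trivially on $K/L$ by conjugation, since $[L,K]\le L$, not merely ``up to inner automorphisms''. This exact triviality is why $H_i$ and $P_i$ induce the same subgroup of $\Aut(K/L)$.
\item The parenthetical appeal to transitivity does not apply as stated. What the previous step actually gives is $\ker\theta_L\cap K=L$, though your direct argument for $p$-embeddedness of $K$ in $H_i$ is fine.
\end{enumerate}
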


\begin{proof}
First, $K$ is $p$-embedded in each $H_i$. Given
$U\trianglelefteq K$ of finite $p$-power index, choose
$L\in\mathcal L$ with $L\leq U$. Since $L$ is characteristic in $K$
and $K\trianglelefteq H_i$, we have $L\trianglelefteq H_i$. Moreover,
$H_i/L$ is a finite $p$-group because both $K/L$ and $H_i/K$ are finite
$p$-groups. Thus the quotient $H_i\to H_i/L$ witnesses the required
$p$-embeddedness of $K$ in $H_i$.

It remains to show that each $H_i$ is $p$-embedded in $\Gamma$. Let
$U_i\trianglelefteq H_i$ have finite $p$-power index. Choose
$L\in\mathcal L$ such that $L\leq K\cap U_1\cap U_2$. The amalgamated free product 
$(H_1/L)\Gamp_{K/L} (H_2/L)$ has normal edge group, and by
hypothesis its two conjugation actions generate a $p$-group in
$\operatorname{Aut}(K/L)$. Theorem~\ref{thm:normal-edge-joint-action}
therefore gives a strong finite $p$-envelope $E$ of this finite amalgam.
The quotient maps $H_i\to H_i/L$, followed by the embeddings into $E$,
agree on $K$ and induce a homomorphism $\Gamma\to E$. Its kernel meets
each $H_i$ in exactly $L$, and hence in a subgroup of $U_i$. Thus each
$H_i$ is $p$-embedded in $\Gamma$.

Theorem~\ref{thm:pembedded-properness} now gives the conclusion.
\end{proof}

\begin{corollary}
\label{thm:normal-double-index-two-entry}
Assume the hypotheses of
Theorem~\ref{thm:normal-finite-p-extension-entry} with $p=2$ and
$[H_i:K]=2$. Then
\[
\widehat\Gamma_2
\cong
\widehat{(H_1)}_2\Gamp_{\widehat K_2}\widehat{(H_2)}_2
\]
as a proper pro-$2$ amalgam and
$[\widehat{(H_i)}_2:\widehat K_2]=2$ for $i=1,2$. Consequently,
\[
\widehat\Gamma_2\text{ is boundedly generated}
\quad\Longleftrightarrow\quad
\widehat K_2\text{ is boundedly generated}.
\]
In particular, the latter condition holds when $K$ is finitely
generated nilpotent and residually $2$.
\end{corollary}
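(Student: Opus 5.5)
By Theorem~\ref{thm:normal-finite-p-extension-entry}, the splitting has exact pro-$2$ entrance. The plan is to apply Theorem~\ref{thm:exact-prop-entrance-edge-classification}; this needs three further inputs. First, the completion-level indices must equal $2$. Second, $\widehat K_2$ must be properly contained in each vertex completion. Third, in the nilpotent case, $\widehat K_2$ must be boundedly generated.

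\emph{Step 1: the indices.} Since $K\trianglelefteq H_i$ with $H_i/K\cong C_2$, I will take $U\triangleleft H_i$ to range over normal subgroups of finite $2$-power index contained in $K$. These are cofinal among all normal subgroups of finite $2$-power index of $H_i$: given any such $W$, the intersection $W\cap K$ is normal in $H_i$ and has $2$-power index, because $[H_i:K]=2$. Consequently
\[
\widehat{(H_i)}_2=\varprojlim H_i/U
\qquad\text{and}\qquad
\text{the closure of }K\text{ is }\varprojlim K/U.
\]
This closure is identified with $\widehat K_2$ via the $2$-embeddedness of $K$ in $H_i$, established in the proof of Theorem~\ref{thm:normal-finite-p-extension-entry}. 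Each $K/U$ has index $2$ in $H_i/U$, and these quotients form compatible systems, so the closure of $K$ is an open normal subgroup of index exactly $2$. Equivalently, the continuous map $\widehat{(H_i)}_2\to H_i/K\cong C_2$ is surjective with kernel the closure of $K$. In particular $\widehat K_2$ is properly contained in $\widehat{(H_i)}_2$, so the nondegeneracy hypothesis of Theorem~\ref{thm:exact-prop-entrance-edge-classification} holds.

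\emph{Step 2: the equivalence.} Theorem~\ref{thm:exact-prop-entrance-edge-classification} with $p=2=[G_i:A]$ now says that $\widehat\Gamma_2$ is boundedly generated if and only if $\widehat K_2$ is. Alternatively, I can cite Proposition~\ref{prop:dihedral-positive} for the forward implication and Theorem~\ref{thm:closed-subgroups-bg-prop} for the converse.

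\emph{Step 3: the nilpotent case.} If $K$ is finitely generated nilpotent, then $\widehat K_2$ is a finitely generated nilpotent pro-$2$ group. Such a group has finite Prüfer rank: its lower central series has finitely many factors, each a finitely generated abelian pro-$2$ group, and these have bounded rank. By \cite[Theorem~3.17]{DDMS1999} it is therefore boundedly generated. This is the same argument as in Corollary~\ref{cor:nilpotent-entrance}.

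I expect the main obstacle to be Step 3. A direct alternative is to repeat Lemma~\ref{lem:extension-bg} along the lower central series. The subtle point is that the closures of the lower central terms are again finitely generated abelian pro-$2$ quotients. The residually-$2$ hypothesis is not actually needed for bounded generation of the completion; it only guarantees that $K$ embeds in $\widehat K_2$.
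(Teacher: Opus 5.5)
Your proposal is correct and follows essentially the same route as the paper: exact pro-$2$ entrance from Theorem~\ref{thm:normal-finite-p-extension-entry}; the closure of $K$ identified with the kernel of the extended map $\widehat{(H_i)}_2\to H_i/K\cong C_2$, and with $\widehat K_2$ via $2$-embeddedness; then Theorem~\ref{thm:exact-prop-entrance-edge-classification}; and finally bounded generation of finitely generated nilpotent pro-$2$ groups, which you justify more explicitly via finite Prüfer rank. One harmless slip: your alternative citation in Step~2 has the two directions reversed, since the implication from $\widehat\Gamma_2$ to $\widehat K_2$ uses Theorem~\ref{thm:closed-subgroups-bg-prop} and the reverse implication uses Proposition~\ref{prop:dihedral-positive}.
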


\begin{proof}
Theorem~\ref{thm:normal-finite-p-extension-entry} gives the proper
pro-$2$ amalgam. For each $i$, the quotient map
$H_i\twoheadrightarrow H_i/K\cong C_2$ extends to a continuous
epimorphism $\widehat{(H_i)}_2\twoheadrightarrow C_2$. Since $K$ is
open in the pro-$2$ topology of $H_i$, its closure in
$\widehat{(H_i)}_2$ is exactly the kernel of this extension. Moreover,
$K$ is $2$-embedded in $H_i$, so this closure is naturally isomorphic to
$\widehat K_2$. Hence
$[\widehat{(H_i)}_2:\widehat K_2]=2$.

The bounded-generation equivalence now follows from
Theorem~\ref{thm:exact-prop-entrance-edge-classification}. If $K$ is
finitely generated nilpotent and residually $2$, then $\widehat K_2$ is
a finitely generated nilpotent pro-$2$ group and is therefore boundedly
generated.
\end{proof}

\begin{remark}
The preceding results concern intrinsic entrance at one fixed prime.
They are not needed for the residual-profile classification of
Section~\ref{sec:canonical-residual-properization}. Nor do they give
a criterion for bounded generation of the full profinite completion:
Theorem~\ref{thm:intro-arithmetic-collapse} shows that
$\widehat\Gamma_p$ may be boundedly generated for a prescribed prime
$p$ while $\widehat\Gamma$ is not. That example lies in the folding
branch and the corresponding amalgam is not residually $p$; it therefore
does not settle the residually-$p$, non-collapsing form of
Question~\ref{q2}. The full profinite Question~\ref{q1} also remains
open. Its bounded-generation condition requires uniform control over all
finite quotients, as in Lemma~\ref{lem:uniform}.
\end{remark}

\end{document}